\newtheorem{theorem}{Theorem}
\newtheorem{corollary}{Corollary}
\newtheorem{remark}{Remark}
\newtheorem{observation}{Observation}
\def\IR {\text{\it IR}}
\def\a{{\buildrel {\text{ind}} \over \longrightarrow}}
\def\na{{\buildrel {\text{ind}} \over \nrightarrow}}
\def\rem{\mathrm{rem}}
\begin{document}
\title[ ON INDUCED RAMSEY NUMBERS FOR MULTIPLE COPIES OF GRAPHS]
{ON INDUCED RAMSEY NUMBERS FOR MULTIPLE COPIES OF GRAPHS}
\author{Maria Axenovich}
\address{Department of  Mathematics\\
   Karlsruher Institute of Technology\\
   Englerstr. 2, D-76131 Karlsruhe \\
   Germany}
\email{maria.aksenovich@kit.edu}

\author{Izolda Gorgol}
\address{Department of Applied Mathematics\\
   Lublin University of Technology\\
    ul.\ Nadbystrzycka 38D, 20-618 Lublin\\
    Poland}
\email{i.gorgol@pollub.pl}
\keywords{induced Ramsey number}
\subjclass
{05D10, 
 05C55
}
\begin{abstract}
We say that a graph $F$ {\it strongly arrows} a pair of graphs $(G,H)$ and write $F \a (G,H)$ if  any colouring of its edges with red and blue  
leads to  either a red $G$  or a blue $H$ appearing as induced subgraphs of $F$. 
{\it The induced Ramsey number}, $\IR(G,H)$ is defined as $\min\{|V(F)|: F\a (G,H)\}$.
We consider the connection between the induced Ramsey number for a pair of two connected graphs $\IR(G,H)$ and the induced Ramsey number for multiple copies of these graphs $\IR(sG,tH)$, where   $xG$ denotes the  pairwise vertex-disjoint union of $x$ copies of $G$. It is easy to see that  if  $F\a (G,H)$ then $(s+t-1)F \a (sG, tH)$. 
This implies that 
$$\IR(sG, tH) \leq (s+t-1)\IR(G,H).$$
For all known results on induced Ramsey numbers for multiple copies, the inequality above holds as equality.
We show that there are infinite classes of  graphs for which  the inequality above is strict and moreover, $\IR(sG, tH)$ could be arbitrarily smaller than 
$(s+t-1)\IR(G,H)$. 
On the other hand, we provide further examples of classes of graphs for which the inequality above holds as equality.
\end{abstract}
\maketitle

\section{Introduction}

We say that a graph $F$ {\it strongly arrows} a pair of graphs $(G,H)$ and write $F\a (G,H)$ if  any colouring of its edges with red and blue  
leads to  either a red copy of $G$  or a blue copy of $H$ appearing as induced subgraphs of $F$. We call the graph $F$ {\it strongly arrowing graph}. 
Here, a  graph $G$ is an {\it induced subgraph} of a graph $F$, denoted by $G\prec F$,  if $G$ is a subgraph of $F$ and
two vertices of $G$ form an edge in $G$ if and only if they form an edge in $F$.  
A {\it copy of a graph} is an isomorphic image of the graph. When it is clear from context, we simply write $F$ instead of a copy of $F$.
For graphs $G$ and $H$,  the {\it induced Ramsey number}, $\IR(G,H)$ is defined as $\min\{|V(F)|: F\a (G,H)\}$. It is a generalization of  classical  Ramsey numbers $R(G,H)$, where we color the edges of a complete graph and do not require the monochromatic copies to be induced, i.e., 
$R(G, H)$ is the smallest integer $n$ such that any edge coloring of $K_n$, a complete graph on $n$ vertices, in red and blue contains either a red copy of $G$ and a blue copy of $H$ as a subgraph. 
It is a corollary of the famous theorem of Ramsey that these numbers are always finite.\\

The existence of the induced Ramsey numbers is not obvious and it was a subject of an intensive study. Finally it was
 proved independently by Deuber \cite{deuber}, Erd\H os, Hajnal, and P\'osa \cite{erdhapo}, and R\"odl \cite{rodl,rodlp}. Since in case of complete graphs the induced subgraph is the same as the subgraph it is obvious that $\IR(K_m,K_n)=R(K_m,K_n)$. When at least one of the graphs in the pair is not complete these functions may differ.
  \\

The known results on induced Ramsey numbers are mostly of asymptotic type and mostly  concern the upper bounds.
Erd\H os conjectured \cite{erdirconj} that there is a positive constant $c$ such  that every graph $G$ with $n$ vertices satisfies $\IR(G,G)\le 2^{cn}$. The most recent result in that direction is that of Conlon, Fox, and Sudakov \cite{confoxsud} who showed that $\IR(G,G)\le 2^{cn\log n}$ improving the earlier result of  Kohayakawa, Pr\"omel and R\"odl  \cite{kohprro} 
that stated that $\IR(G,G)\le 2^{cn(\log n)^2}$. The known  upper bounds are obtained either by probabilistic (\cite{becksize1, haxkohlu, kohprro, luczdeg}) or by constructive methods \cite{schaefer}.  A comparision of results of both types can be found in the paper of Shaefer and Shah \cite{schaefer}. 
Fox and Sudakov in \cite{fox_sudakov_induced} present a  unified  approach  to  proving  Ramsey-type  theorems  for  graphs  with  a  forbidden induced  subgraph which can be used  in finding   explicit  constructions  for  upper  bounds  on  various  induced  Ramsey numbers.  

Simple lower bounds on induced Ramsey numbers follow from classical Ramsey numbers: $\IR(G,H) \ge R(G,H).$
Another general approach for the lower bounds on $\IR(G,H)$, where $H$ has chromatic number $k$ is to partition the vertex set of a given graph in $k-1$ parts such that each part does not induce $G$, color all edges within the parts red and all edges between the parts blue. The number of such parts is dictated by generalized chromatic numbers, see among others a paper by Albertson, Jamison, Hedetniemi,  and Locke \cite{Albertson_subchromatic} and the references therein. 

Unfortunately these lower bounds are not strong enough and the best lower bounds are obtained by a  careful structural analysis of a given graph.
Recently a step in that direction was done by Gorgol in \cite{gorgol-LB}. She showed that the lower bound for the induced Ramsey number for a connected graph $G$ with an independence number $\alpha$ versus a graph $H$ with the clique number $\omega$ can be expressed as $(\alpha-1)\frac{\omega(\omega-1)}2+\omega$. 
\\

 We focus on induced Ramsey numbers for multiple copies of connected graphs. 
The ordinary Ramsey numbers for multiple copies of connected graphs were considered by Burr, Erd\H os, and  Spencer in \cite{ramseycopies}.

 Let $t$ be a positive integer and $F$ be a graph.  Recall that $tF$ denotes  a graph that is a pairwise vertex-disjoint union of  $t$   copies of $F$.  Consider graphs $G$ and $H$ such that $F \a ({ G},{ H})$. Consider $s+t-1$ vertex disjoint copies of $F$ and color the edges of the resulting graph in red and blue. Then each copy of $F$ will either have a red induced copy of $G$ or a blue induced copy of $H$. The pigeonhole principle implies that there will be either a red copy of $sG$ or a blue copy of $tH$. This gives the following.

 \begin{observation}\label{main}
Let $G$, $H$ be graphs.
If $F \a ({ G},{ H})$ then
$$(s+t-1)F \a ({ sG},{ tH}).$$ Thus 
 \begin{equation}\label{maineq}
\IR(sG,tH)\ \le\ (s+t-1)\IR(G,H).
\end{equation}
\end{observation}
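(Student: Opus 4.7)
The plan is to formalize the argument already sketched in the paragraph preceding the observation. First I would take $F$ with $F \a (G,H)$, form the vertex-disjoint union $F' = (s+t-1)F$, and fix an arbitrary red/blue edge-colouring $c$ of $F'$. Since each of the $s+t-1$ copies of $F$ inherits a 2-edge-colouring from $c$, the hypothesis $F \a (G,H)$ applied to each copy separately guarantees that every copy contains either a red induced $G$ or a blue induced $H$.

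Next I would apply the pigeonhole principle: if fewer than $s$ of the copies contain a red induced $G$, then at least $(s+t-1)-(s-1) = t$ of them must contain a blue induced $H$. In the first case, pick one red induced copy of $G$ from each of $s$ distinct copies of $F$; in the second, pick one blue induced copy of $H$ from each of $t$ distinct copies of $F$. The small but essential point to verify here is that these chosen subgraphs, when taken together, actually form an induced copy of $sG$ (respectively $tH$) in $F'$. This follows immediately from the fact that the copies of $F$ are pairwise vertex-disjoint, so there are no edges whatsoever between vertices chosen from different copies; hence the union of the induced subgraphs is again induced, and matches $sG$ (respectively $tH$) as a graph. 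This establishes $(s+t-1)F \a (sG, tH)$.

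Finally, the displayed inequality follows by choosing $F$ to be a witness to the minimum in the definition of $\IR(G,H)$, i.e.\ a graph on $\IR(G,H)$ vertices with $F \a (G,H)$. Then $(s+t-1)F$ has exactly $(s+t-1)\IR(G,H)$ vertices and strongly arrows $(sG, tH)$, so by definition $\IR(sG, tH) \le (s+t-1)\IR(G,H)$. There is no real obstacle in this proof; the only place one could slip is in the induced-subgraph verification in the second paragraph, which is why I would write that step out explicitly rather than treating it as automatic.
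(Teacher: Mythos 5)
Your proposal is correct and follows exactly the argument the paper gives immediately before the observation: colour the disjoint union, apply $F \a (G,H)$ to each copy, and invoke the pigeonhole principle, with the induced-ness of the assembled $sG$ or $tH$ guaranteed by vertex-disjointness. The only difference is that you make explicit the verification that disjointness preserves induced subgraphs and the final step of choosing a minimum-order witness, both of which the paper leaves implicit.
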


For all known so far results on induced Ramsey numbers for multiple copies, the inequality above holds as equality.
We raised the following question:  Does  there exist a pair  of graphs $(G,H)$ and a graph $X$ such that  $$|V(X)|\ { <}\ (s+t-1)\IR(G,H)~~~{\rm  and } ~~~
X \a (sG,tH)?$$

We answer this question in a positive by showing  that there are infinite classes of graphs where the inequality above is  strict and moreover, $IR(sG, tH)$ is  arbitrarily smaller than 
$(s+t-1)\IR(G,H)$.  On the other hand, we provide further examples of classes of graphs for which the inequality above holds as equality.\\

 Already in the case when $H=2K_2$, we observe a different behavior of $\IR(G, H)$ depending on $G$. 
 
 \setcounter{theorem}{5}
 
 \begin{theorem}
Let $G$ be a connected graph, $s$ be an integer, $s\geq|V(G)|$. Then 
$\IR(sG, 2K_2) = (s+1) \IR(G, K_2) = (s+1) |V(G)|$. 
 \end{theorem}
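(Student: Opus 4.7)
The plan has two parts, upper and lower bound.

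For the upper bound, the identity $\IR(G, K_2) = |V(G)|$ is immediate: a graph $F$ strongly arrows $(G, K_2)$ precisely when $G \prec F$, since any blue edge gives a blue $K_2$ and any all-red colouring must produce a red induced copy of $G$, and the smallest such $F$ is $G$ itself. Plugging this into Observation~\ref{main} with $t = 2$ gives $\IR(sG, 2K_2) \le (s+1)|V(G)|$.

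For the lower bound, set $n = |V(G)|$ and take any $F$ with $|V(F)| \le (s+1)n - 1$; the goal is to $2$-edge-colour $F$ with no red induced $sG$ and no blue induced $2K_2$. If $sG \not\prec F$, colour every edge red. Otherwise fix an induced copy with vertex-disjoint parts $V_1, \ldots, V_s$ and set $W = V(F) \setminus (V_1 \cup \cdots \cup V_s)$, so that $|W| \le n - 1 < s$. The main step is to locate a vertex $v^* \in V(F)$ with $sG \not\prec F - v^*$. Given such $v^*$, colour every edge incident to $v^*$ blue and the remaining edges red. The blue edges form a star at $v^*$ and hence contain no induced $2K_2$, and any red induced copy of $sG$ must miss $v^*$ (because $G$ is connected on at least two vertices, so an induced copy containing $v^*$ would have $v^*$ incident to an edge within the copy, which would be blue), which then gives $sG \prec F - v^*$, contradicting the choice of $v^*$.

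The existence of $v^*$ is the main obstacle. I plan to prove it by contradiction: suppose $sG \prec F - v$ for every $v \in V(F)$. For each $v \in V_1 \cup \cdots \cup V_s$, fix an induced copy $M^{(v)} \cong sG$ in $F - v$. Since $|M^{(v)}| = sn$ and $|V_1 \cup \cdots \cup V_s \setminus \{v\}| = sn - 1$, each $M^{(v)}$ uses at least one vertex of $W$. Summing gives $\sum_v |M^{(v)} \cap W| \ge sn$, and combined with $|W| \le n - 1$ and $s \ge n$, pigeonhole locates some $w^* \in W$ contained in $M^{(v)}$ for more than $n$ values of $v$. The plan is then to analyse the $G$-block of $M^{(v)}$ containing $w^*$ for each such $v$, interpret it as a ``substitute'' for $v$ inside the original block $V_j$ containing $v$, and show that each $w \in W$ can substitute for vertices in at most one $V_j$ (since any induced $G$-copy containing both $w$ and vertices from $V_j$ constrains $w$'s adjacencies in such a way that $w$ cannot simultaneously mimic a vertex of a different block $V_{j'}$). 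Combined with $s \ge n$, this forces more than $n - 1$ distinct substitutes in $W$, contradicting $|W| \le n - 1$. The delicate point is the handling of $G = K_n$, where all vertices of $G$ are pairwise twins and the substitution bookkeeping degenerates; in that case the argument is replaced by the following cleaner construction: since $\omega(F) \ge \omega(G) = n$, fix a clique $C \subseteq V(F)$ with $|C| = n$ and colour blue all edges incident to $C$. The remaining red graph has at most $sn - 1$ vertices, so no red induced $sG$; and any induced $2K_2$ contains two non-adjacent vertices, which cannot both lie in the clique $C$, so no blue induced $2K_2$ either.
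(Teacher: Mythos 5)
Your upper bound and the identity $\IR(G,K_2)=|V(G)|$ are fine, and your reduction of the lower bound to the existence of a vertex $v^*$ with $sG\not\prec F-v^*$ is valid: colouring the star at $v^*$ blue kills every induced blue $2K_2$, and since $G$ is connected on at least two vertices no red induced $sG$ can use $v^*$. The special treatment of $G=K_n$ via a clique $C$ is also correct. But the existence of $v^*$ is the entire content of the lower bound, and your argument for it has a genuine gap, in two places. First, the bookkeeping is not well defined: the component of $M^{(v)}$ containing $w^*$ is just some induced copy of $G$ through $w^*$; nothing forces it to meet the block $V_j$ containing the deleted vertex $v$, so it cannot be read as ``a substitute for $v$ inside $V_j$.'' Second, the pivotal claim that each $w\in W$ can substitute into at most one block is unjustified and false as stated: for $G=P_3$, a vertex $w$ adjacent to the middle vertices $b_1\in V_1$ and $b_2\in V_2$ (and nothing else) lies in induced copies $a_1b_1w$ and $a_2b_2w$ meeting two different blocks, and there is no local obstruction to both extending to full bundles. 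This is precisely the difficulty the paper's proof is built around: it introduces a bipartite ``availability'' graph $Q$ between $Y$ ($=W$) and the blocks $X_1,\dots,X_s$, establishes only that every block has degree at least $1$ in $Q$ (not that every $w$ has degree at most $1$), and then, since $|Y|<s$ forces a deficient set, takes a \emph{minimal} set $X'$ with $|N_Q(X')|<|X'|$ and contradicts its minimality using a bundle chosen to intersect as many members of $X'$ nontrivially as possible. Nothing in your sketch reproduces this defect--Hall step, and a correct proof of your claim would seem to require machinery of comparable strength. Note also that your claim (``some vertex-deleted subgraph contains no induced $sG$'') is a purely structural statement that is strictly stronger than what the paper proves; even granting the theorem, it does not follow automatically and would need its own argument.
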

I.e., in this case the inequality (\ref{maineq}) holds as an equality. Let $P_n$ denote a path on $n$ vertices.
 \setcounter{theorem}{7}
\begin{theorem} $(1)$ For an integer $n\geq 5$, 
$\IR(P_n, 2K_2) = n+2 < 2n = 2\IR(P_n, K_2)$.
\end{theorem}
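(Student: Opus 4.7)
The plan is to prove matching bounds $\IR(P_n,2K_2) \leq n+2$ and $\IR(P_n,2K_2) \geq n+2$. The comparison with $2\IR(P_n,K_2) = 2n$ is immediate from $\IR(P_n,K_2) = n$ (easy, since $F = P_n$ itself already strongly arrows $(P_n,K_2)$), and $n+2 < 2n$ holds whenever $n \geq 5$.

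For the upper bound I would show $C_{n+2} \a (P_n, 2K_2)$. Label the cycle as $v_0, v_1, \ldots, v_{n+1}$ with edges $e_i = v_i v_{i+1 \pmod{n+2}}$ and fix a red/blue coloring. A direct check on the four vertices $\{v_i, v_{i+1}, v_j, v_{j+1}\}$ shows that two disjoint blue cycle-edges $e_i, e_j$ induce a blue $2K_2$ in $F$ precisely when their cyclic distance is at least $3$. Hence, if there is no blue induced $2K_2$, every pair of blue edges has cyclic distance at most $2$. For $n \geq 5$, so that $n+2 \geq 7$, a short case analysis forces the blue edges into a window of three consecutive cycle edges $\{e_a, e_{a+1}, e_{a+2}\}$; the hypothesis $n \geq 5$ is essential since in $C_6$ the set $\{e_0, e_2, e_4\}$ has pairwise cyclic distance $2$ yet lies in no such window. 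Once the blue edges are confined to this window, the induced subgraph on $V(F) \setminus \{v_{a+1}, v_{a+2}\}$ is a path on $n$ vertices whose $n-1$ edges are all red, which is the required red induced $P_n$.

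For the lower bound I would show that no graph $F$ on $n+1$ vertices strongly arrows $(P_n, 2K_2)$, by exhibiting for each such $F$ a coloring avoiding both. If $F$ contains no induced $P_n$, color every edge red. Otherwise, fix an induced copy $P = u_1 u_2 \cdots u_n$ of $P_n$ in $F$ and let $v$ be the remaining vertex; write $N = N(v) \cap V(P)$. If $N = \emptyset$, color only the edge $u_1 u_2$ blue: $P$ is then the only induced $P_n$ in $F$ and has a blue edge, while a single blue edge cannot create an induced $2K_2$. If $N \neq \emptyset$, pick any $u_b \in N$ and any edge $e$ of $P$ incident to $u_b$, and color blue exactly the edge $e$ together with every edge of $F$ incident to $v$.

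The verification has two parts. The blue subgraph is a star at $v$ plus the single extra edge $e$; two star edges share $v$, and a pair consisting of $e$ and a disjoint star-edge $v u_j$ spans a set of four vertices containing $u_b$, so the edge $v u_b$ provides a third blue edge there and rules out an induced $2K_2$. An induced $P_n$ in $F$ must omit exactly one vertex: omitting $v$ returns $P$, which carries the blue edge $e$; omitting $u_i$ with $i \neq b$ keeps the blue edge $v u_b$ inside the induced subgraph; and omitting $u_b$ either exposes another blue $v$-edge or, if $N = \{u_b\}$, isolates $v$ so that the induced subgraph is disconnected and not isomorphic to $P_n$. The main obstacle is arranging a single ``extra'' blue edge $e$ inside $P$ that simultaneously destroys the red copy $P$ (the only red-monochromatic candidate on $V(P)$) and is shielded from forming an induced blue $2K_2$ with the star at $v$; choosing $e$ adjacent to a vertex $u_b \in N$ is precisely the trick that makes both constraints hold.
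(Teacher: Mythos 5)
Your proposal is correct and follows essentially the same route as the paper: the upper bound via showing $C_{n+2}\a(P_n,2K_2)$ (the paper argues the contrapositive of your "window" observation, but it is the same idea), and the lower bound via the same explicit coloring of an arbitrary $(n+1)$-vertex graph (a blue star at the extra vertex $v$ plus one blue edge of the path at a neighbour of $v$), which the paper states as the more general Theorem~\ref{g2k2lower} for any graph without isolated vertices.
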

I.e., in this case  the inequality (\ref{maineq})  is strict and provides an arbitrarily large gap between $\IR(sG,tH)$ and $(s+t-1)\IR(G,H)$.
We prove these theorems and more results on  paths and matchings in Section \ref{Matchings}.

We find bounds on induced Ramsey numbers for short paths and complete graphs in Section \ref{Paths-cliques}.
Finally, we prove the following result in Section \ref{Triangles}.
 \setcounter{theorem}{11}
\begin{theorem}
Let $k$ be an integer,  $k\geq 2$. Then $\IR(K_3, kK_3)= 6k$.
\end{theorem}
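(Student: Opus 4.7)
The plan splits into establishing the matching upper and lower bounds.

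\textbf{Upper bound.} The inequality $\IR(K_3, kK_3) \leq 6k$ is immediate from Observation \ref{main} applied with $s=1$, $t=k$, $G=H=K_3$, together with the classical identity $\IR(K_3, K_3) = R(K_3, K_3) = 6$. A concrete witness is $F = kK_6$: in any red/blue edge-coloring of $F$, either some $K_6$ component contains a red triangle (since $K_6 \a (K_3,K_3)$), or every $K_6$ component contains a blue triangle, in which case the $k$ blue triangles lie in pairwise disjoint components and hence form a blue induced $kK_3$.

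\textbf{Lower bound.} For $\IR(K_3, kK_3) \geq 6k$, I must show that every graph $F$ with $|V(F)| \leq 6k-1$ admits a red/blue coloring of $E(F)$ with no red $K_3$ and no blue induced $kK_3$. Fix such an $F$. If $F$ contains no induced $kK_3$ at all, the all-blue coloring works. Otherwise, let $l$ be the largest integer with $lK_3 \prec F$, so $k \leq l \leq 2k-1$ (the upper bound coming from $3l \leq 6k-1$). Fix triangles $T_1, \dots, T_l$ forming an induced $lK_3$ and let $W = V(F) \setminus \bigcup V(T_i)$, so that $|W| = |V(F)| - 3l \leq 3k-1 < 3k$.

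Two structural consequences are central. First, since $F[\bigcup V(T_i)] = lK_3$, there are no $F$-edges between distinct $T_i$'s; hence every triangle of $F$ has its vertex set in $V(T_{i_0}) \cup W$ for at most one index $i_0$, or entirely in $W$. Second, by the maximality of $l$, any triangle contained entirely in $W$ must have an $F$-edge to some $T_i$, since otherwise $\{T_1,\dots,T_l,T\}$ would be an induced $(l+1)K_3$.

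The proposed coloring: in each $T_i$ choose an edge $e_i$ with care and color it red, coloring every other edge blue, together with possibly a few auxiliary red edges between $W$ and the $T_i$'s (or within $W$) to kill blue triangles of mixed type when these arise. The edges $\{e_i\}$ form a matching since the $T_i$'s are vertex-disjoint and mutually non-adjacent, so they alone create no red triangle; any auxiliary red edges are chosen to preserve triangle-freeness of the red subgraph. Suppose for contradiction a blue induced $kK_3$ with triangles $T'_1,\dots,T'_k$ existed. No $T'_j$ could coincide with any $T_i$, since $T_i$ contains the red $e_i$. By the first structural fact, each $T'_j$ has vertices in $V(T_{i(j)}) \cup W$ for some single $i(j)$, or entirely in $W$, and in the latter case the second fact forces an $F$-edge from $T'_j$ to some $T_i$. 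A pigeonhole argument using $|W| \leq 3k-1$ and the disjointness of the $T'_j$'s ensures that at least one $T'_j$ uses a vertex of some $T_{i(j)}$. A case analysis by the three possible types of $T'_j$ (two vertices in some $T_i$ and one in $W$; one vertex in some $T_i$ and two in $W$; or three vertices in $W$) then shows that the edges $e_i$ can be chosen, and any auxiliary red edges added, so that some $T'_j$ must contain a red edge, giving the contradiction.

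The main obstacle is the combinatorial coordination of the red-edge choices across all $T_i$ simultaneously, especially when $F$ contains dense sub-structures (such as $K_4$'s or $K_6$'s around the $T_i$'s) that produce many mixed-type blue triangles. A single red edge per $T_i$ does not always suffice, and the argument must balance triangle-freeness of the red subgraph against hitting every potential blue induced $kK_3$ configuration, using the bound $|W| \leq 3k-1$ as the critical numerical lever.
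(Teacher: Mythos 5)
Your upper bound is exactly the paper's and is fine. The lower bound, however, has a genuine gap: the entire content of the argument is the construction of a red/blue colouring of $F$ with no red $K_3$ and no blue induced $kK_3$, and you never actually produce one. You say ``choose an edge $e_i$ with care'', add unspecified ``auxiliary red edges'', and assert that a case analysis ``shows that the edges $e_i$ can be chosen'' so that every candidate blue $kK_3$ is hit --- and then you yourself concede that a single red edge per $T_i$ does not always suffice and that coordinating the choices is ``the main obstacle''. That obstacle is the theorem; as written, the proposal is a plan for a proof, not a proof. In particular, nothing in your sketch controls blue triangles with one vertex in some $T_i$ and two in $W$, or entirely in $W$: killing those requires red edges inside $W$ or between the $T_i$'s and $W$, and it is precisely there that keeping the red graph triangle-free while destroying \emph{every} induced $kK_3$ becomes nontrivial. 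The pigeonhole step ($|W|\leq 3k-1$ forces some blue triangle to meet some $T_i$) is correct but does not by itself get you anywhere, since the remaining $k-1$ blue triangles can still live wherever they like.

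The paper closes exactly this gap with two ideas you are missing. First, a partition lemma: every $n$-vertex graph admits a bipartition $V=V_1\cup V_2$ such that every induced matching has at most $n/3$ edges with both endpoints in the same part. Second, a concrete colouring: with the triangles written as $a_i b_i c_i$ and $X$ the leftover vertices partitioned as $X'\cup X''$ by the lemma, colour the \emph{path} $a_ib_i, b_ic_i$ red (two edges per triangle, not one), and also colour $F[a_i,X']$, $F[c_i,X'']$ and all of $F[X',X'']$ red; everything else blue. One checks the red graph is triangle-free, that every blue triangle meets each $\{a_i,b_i,c_i\}$ in at most one vertex, and hence that a blue induced $kK_3$ yields a $k$-edge induced matching inside $X'$ or $X''$, which the lemma caps at $|X|/3$; this forces $|X|\geq 3k$ and so $|V(F)|\geq 6k$. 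If you want to salvage your approach, you should either reproduce a device of this kind or find a different explicit colouring together with a full verification; the current sketch cannot be completed as stated.
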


We give known results on induced Ramsey numbers of multiple copies of graphs in Section \ref{Known}. 
In the last section we state some general observations.

~\\ 
 
 \section{Known results on induced Ramsey numbers for multiple copies} \label{Known}
 
\setcounter{theorem}{0} 
 
 First we introduce some basic notation.
 For a graph $F$ and subsets of vertices $S$ and  $S'$,  $F[S]$ denotes a graph induced by  $S$,  
$F[S, S']$ denotes a bipartite subgraph of $F$ containing all edges with one endpoint in $S$ and another in $S'$,  $F[\{x\}, S]$ is denoted $F[x,S]$. We denote the vertex and the edge sets of $F$ by $V(F)$ and $E(F)$, respectively.

For graphs $G$, $H$,  the  vertex-disjoint union  of $G$ and $H$ is denoted $G\cup H$,   $G\setminus H$ denotes a graph obtained from 
$G$ by removing $V(H)$. The  independence number of a graph $G$, i.e. the size of the largest set of mutually nonadjacent vertices, is denoted   $\alpha(G)$. The symbols $P_n$,  $K_n$, and $S_n$ stand for a path on $n$ vertices,  a complete graph on $n$ vertices, and a star with $n$ edges. For all other graph theoretic notions we refer the reader to the books of West \cite{west} and Diestel \cite{diestel}.

For any $n$-vertex graph $G$,  $\IR(K_2,G)=n$. Gorgol and {\L}uczak \cite{gorgolluczak} obtained the exact value of induced Ramsey number for a matching versus a complete graph and Gr\"unewald for two matchings \cite{grunewald}.

\begin{theorem}\cite{gorgolluczak}
Let $s\ge 1$ and $n\ge 2$ be integers. Then $\IR(sK_2,K_n)\ =\ sn$.
\end{theorem}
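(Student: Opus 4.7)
The plan is to establish the upper and lower bounds separately. For the upper bound $\IR(sK_2, K_n) \leq sn$, I would verify directly that $F = sK_n$ strongly arrows $(sK_2, K_n)$: in any red/blue colouring of its edges, either some component of $F$ is entirely blue, giving a blue induced $K_n$, or every one of the $s$ components contains a red edge, and picking one red edge per component produces $s$ pairwise vertex-disjoint red edges with no connecting edges of $F$, hence a red induced $sK_2$. This recovers the bound already implied by Observation 1 applied to the trivial equality $\IR(K_2, K_n) = n$.

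For the lower bound I would induct on $s$. The base case $s = 1$, namely $\IR(K_2, K_n) \geq n$, holds because any graph on fewer than $n$ vertices contains no $K_n$, so colouring all its edges blue produces no blue $K_n$ and no red edge. For the inductive step, assume $\IR((s-1)K_2, K_n) = (s-1)n$ and suppose $F \a (sK_2, K_n)$. Colouring every edge of $F$ blue must force a blue $K_n$, so $F$ contains a clique on $n$ vertices; fix such a set $V' \subseteq V(F)$ with $|V'| = n$. The main claim is that $F - V'$ also strongly arrows $((s-1)K_2, K_n)$; combined with the inductive hypothesis, this yields $|V(F)| - n \geq (s-1)n$, hence $|V(F)| \geq sn$ as required.

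To prove the main claim I would proceed by contradiction. Given a colouring $c$ of $F - V'$ with no red induced $(s-1)K_2$ and no blue $K_n$, extend $c$ to $F$ by colouring every edge incident to $V'$ red. The extension has no blue $K_n$, since all blue edges lie inside $F - V'$. Now suppose $\{e_1,\ldots,e_s\}$ is a red induced $sK_2$ in the extension. Any two distinct edges $e_i,e_j$ both meeting $V'$ would have $V'$-endpoints joined by an edge of $F$ (as $V'$ induces a clique), contradicting the induced structure of the matching; hence at most one $e_i$ meets $V'$. Discarding this edge if it exists leaves $s-1$ red induced matching edges entirely inside $F - V'$, contradicting the choice of $c$. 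The one nontrivial ingredient is this clique-peeling observation, and once one notices that the clique $V'$ can absorb at most one edge of any induced matching, the rest of the induction is bookkeeping.
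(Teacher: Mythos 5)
Your proof is correct. Note, however, that the paper does not prove this statement at all: it is quoted as a known result of Gorgol and {\L}uczak \cite{gorgolluczak}, so there is no in-paper argument to compare yours against. On its own merits your argument is sound. The upper bound via $F=sK_n$ works because distinct components contribute no connecting edges, so one red edge per component is automatically an induced matching, and a component with no red edge is a blue induced $K_n$. The lower bound induction is also correct: the key observation that a clique $V'$ whose incident edges are all coloured red can meet at most one edge of a red \emph{induced} matching (two such edges would have their $V'$-endpoints adjacent, violating inducedness) is exactly what makes the peeling step $|V(F)|\ge n+\IR((s-1)K_2,K_n)$ go through, and the base case $\IR(K_2,K_n)\ge n$ is immediate. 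The only point worth stating explicitly is that the $s-1$ surviving matching edges remain an induced matching in the induced subgraph $F-V'$ and retain their colour under $c$, both of which are clear.
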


\begin{theorem}\cite{grunewald}
Let $s, t\ge 1$  be integers. Then $\IR(sK_2, tK_2)=2(s+t-1)$.
\end{theorem}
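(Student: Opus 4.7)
A single edge strongly arrows $(K_2,K_2)$, so $\IR(K_2,K_2)=2$, and Observation~\ref{main} immediately yields $\IR(sK_2,tK_2)\le 2(s+t-1)$. Concretely, the matching $(s+t-1)K_2$ strongly arrows $(sK_2,tK_2)$: any 2-edge-coloring of its $s+t-1$ independent edges has $\ge s$ red or $\ge t$ blue by pigeonhole, and an edge-subset of a matching is automatically an induced subgraph.

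\textbf{Lower bound: plan.} Given a graph $F$ with $n\le 2s+2t-3$ vertices, I would produce a 2-edge-coloring of $F$ with no red induced $sK_2$ and no blue induced $tK_2$. Assume $s\le t$ and let $M$ be a maximum induced matching of $F$ with $|M|=m$; since $2m\le n$, we get $m\le s+t-2$. If $m<s$, color all edges red (every induced matching in red has $\le m<s$ edges, and there are no blue edges). If $s\le m<t$, color all edges blue for the symmetric reason. Otherwise we are in the main case $t\le m\le s+t-2$.

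In the main case, list $M=\{e_1,\ldots,e_m\}$ and color $e_1,\ldots,e_{s-1}$ red and $e_s,\ldots,e_m$ blue (so the blue part of $M$ has size $m-s+1\le t-1$). Because $F[V(M)]=M$, every non-$M$ edge has at least one endpoint in $U:=V(F)\setminus V(M)$, where $|U|\le n-2m\le 2s-3$. The plan is to color each non-$M$ edge by a rule based on where its endpoints attach to $V(M)$ (namely, which $e_i$ contains a neighbor of the endpoint in $U$). The key local observation supporting this is: a single non-$M$ edge $f$ cannot simultaneously complete $\{e_1,\ldots,e_{s-1}\}$ to a red induced $sK_2$ and complete $\{e_s,\ldots,e_m\}$ to a blue induced $tK_2$, for this would force $V(f)\subseteq U$ with no neighbors in $V(M)$, so that $M\cup\{f\}$ would be an induced matching of size $m+1$, contradicting the maximality of $M$.

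\textbf{Main obstacle.} The local ``not bad for both colors'' argument above does not rule out monochromatic induced matchings of forbidden size that mix several non-$M$ edges with a strict subset of the $M$-edges of the same color. The technical heart of the proof is to design a single consistent coloring rule for non-$M$ edges, and then to verify it by a structural case analysis: for any putative red induced $sK_2$ or blue induced $tK_2$ one must trace how its vertices in $U$ attach to $V(M)$ and repeatedly invoke the maximality of $M$ (swapping out some $e_i$ for a new edge through $U$ to produce alternative maximum induced matchings) to derive a contradiction. The smallness of $|U|\le 2s-3$ is what keeps this case analysis finite and tractable; I expect this bookkeeping, rather than any single clever step, to be the principal difficulty.
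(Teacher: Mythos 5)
Your upper bound is correct and is exactly the argument the paper uses (Observation~\ref{main} applied to $F=K_2$). Note, however, that the paper does not prove the lower bound itself: the result is cited from Gr\"unewald's thesis \cite{grunewald}, so there is no in-paper argument to compare against. Judged on its own terms, your lower bound is not a proof but an announced plan, and the gap is genuine.

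Concretely: your reduction to the main case $t\le m\le s+t-2$ (with $s\le t$, $M$ a maximum induced matching, $|U|\le 2s-3$) is fine, and the split of $M$ into $s-1$ red and at most $t-1$ blue edges is a sensible start. But the entire content of the lower bound lies in colouring the edges \emph{not} in $M$ --- every edge incident to $U$, plus all edges inside $F[V(M)]$ beyond $M$ itself if $M$ is induced-maximum but $V(M)$ is not an independent union of edges --- and you never specify that rule. The ``local observation'' you offer only excludes a single edge $f$ that is simultaneously non-adjacent to all of $V(M)$; it says nothing about a red induced $sK_2$ built from, say, $s-2$ of the red $e_i$'s together with two edges meeting $U$, which is exactly the kind of configuration the colouring must be engineered to forbid. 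You acknowledge this yourself (``the technical heart of the proof is to design a single consistent colouring rule \dots I expect this bookkeeping \dots to be the principal difficulty''), which is an honest assessment but means the proof is missing precisely its hardest step. A complete argument must exhibit the rule and carry out the case analysis (or find a different global construction); as written, the claim $\IR(sK_2,tK_2)\ge 2(s+t-1)$ is not established.
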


Kostochka and Sheikh \cite{kostoczka} considered the case when one graph in a pair is a $P_3$.

\begin{theorem}\cite{kostoczka} 
For any positive integers $n_1, \ldots, n_m$, 
$$\IR(P_{3},\bigcup_{i=1}^m K_{n_i})\ = \sum_{i=1}^m \binom{n_i+1}{2}\ =\ \sum_{i=1}^m \IR(P_3,K_{n_i}).$$
\end{theorem}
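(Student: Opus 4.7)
The plan is to establish the upper and lower bounds separately. For the upper bound, I would first prove the single-clique lemma $\IR(P_3, K_n) \leq \binom{n+1}{2}$ by showing that the complete multipartite graph $K_{1,2,\ldots,n}$ with parts $V_1,\ldots,V_n$ of sizes $1,2,\ldots,n$ strongly arrows $(P_3, K_n)$. The argument is by induction on $n$: given a red-blue coloring of $K_{1,2,\ldots,n}$ with no red induced $P_3$, observe that for any non-edge $\{u,v\}$ (which necessarily lies inside some part $V_k$) and any common neighbor $w \notin V_k$, at most one of the edges $wu, wv$ is red; hence each vertex has at most one red neighbor in each other part. By the inductive hypothesis applied to the induced subgraph on $V_1 \cup \cdots \cup V_{n-1}$, which is $K_{1,2,\ldots,n-1}$, there is a blue induced $K_{n-1}$ with vertices $t_i \in V_i$ for $i=1,\ldots,n-1$. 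To block a blue induced $K_n$, every vertex of $V_n$ must connect by a red edge to some $t_i$, yet the $n-1$ vertices $t_i$ supply at most $n-1$ red edges into $V_n$, which cannot reach all $n$ vertices of $V_n$, a contradiction. The general upper bound then follows by taking the vertex-disjoint union $F = K_{1,2,\ldots,n_1} \cup \cdots \cup K_{1,2,\ldots,n_m}$ on $\sum \binom{n_i+1}{2}$ vertices; any induced $P_3$ lies within a single component, so a coloring with no red induced $P_3$ yields a blue induced $K_{n_i}$ in each component, and these assemble into a blue induced $\bigcup K_{n_i}$ since the components are pairwise non-adjacent in $F$.

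For the lower bound, given any graph $F$ with $|V(F)| < \sum \binom{n_i+1}{2}$, I would exhibit a 2-coloring avoiding both a red induced $P_3$ and a blue induced $\bigcup K_{n_i}$. The plan is to choose a clique partition $V(F) = C_1 \cup \cdots \cup C_p$ of $F$ and to color edges within each $C_j$ red and edges between distinct cliques blue. This coloring automatically avoids a red induced $P_3$: any two red edges sharing a vertex lie in the same clique $C_j$, so the three relevant vertices induce a triangle in $F$ rather than an induced $P_3$. A blue induced $\bigcup K_{n_i}$ in this coloring corresponds to a transversal $S \subseteq V(F)$ with $|S \cap C_j| \leq 1$ for each $j$ and with $F[S] \cong \bigcup K_{n_i}$, so the crux of the lower bound is to select the partition so that no such transversal exists.

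The main obstacle is the existence of this good partition. I would proceed by induction on $m$. The base case $m=1$ reduces to $\IR(P_3, K_n) \geq \binom{n+1}{2}$, namely that every graph on fewer than $\binom{n+1}{2}$ vertices admits a clique partition in which no $n$ vertices taken from $n$ distinct parts form a $K_n$ in $F$; this is plausibly proved by a greedy or extremal partition argument. For the inductive step, either $F$ contains no $K_{n_m}$ subgraph (and then the all-blue coloring avoids both forbidden structures), or I would extract a copy $Q$ of $K_{n_m}$ from $F$, apply the inductive hypothesis to obtain a good coloring of $F \setminus Q$, and extend it by coloring all internal edges of $Q$ red and all edges from $Q$ to $F \setminus Q$ blue. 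The delicate accounting, which is the hardest part, is to verify that no new blue induced $\bigcup K_{n_i}$ arises: such a copy can use at most one vertex of $Q$ (since any two vertices of $Q$ are joined by a red edge), so the structure inside $F \setminus Q$ must already block an extension by any single vertex of $Q$. Carrying this out requires strengthening the inductive statement to absorb the extra $\binom{n_m}{2}$ slack in the vertex count and to track how the removed clique $Q$ interacts with the transversal structure of the inherited partition.
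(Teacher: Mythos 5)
First, a caveat: the paper does not prove this statement at all --- it is quoted from Kostochka and Sheikh \cite{kostoczka} as a known result --- so there is no in-paper proof to compare yours against; I can only assess your argument on its own terms. Your upper bound is correct and complete. The complete multipartite graph $K_{1,2,\dots,n}$ on $1+2+\cdots+n=\binom{n+1}{2}$ vertices does strongly arrow $(P_3,K_n)$: in a red-$P_3$-free colouring every vertex has at most one red neighbour in each other part, the induction supplies a blue clique $\{t_1,\dots,t_{n-1}\}$ meeting $V_1,\dots,V_{n-1}$, and the $n-1$ vertices $t_i$ can red-dominate at most $n-1$ of the $n$ vertices of $V_n$, so a blue $K_n$ appears. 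Passing to the disjoint union of the graphs $K_{1,2,\dots,n_i}$ is legitimate because $P_3$ is connected and distinct components are non-adjacent, so the blue cliques found in the components assemble into an induced blue $\bigcup_i K_{n_i}$.

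The genuine gap is the lower bound, and you have in fact located it yourself. The induction on $m$ does not close as set up: after deleting a copy $Q$ of $K_{n_m}$, the remaining graph has fewer than $\sum_{i=1}^{m-1}\binom{n_i+1}{2}+\binom{n_m}{2}$ vertices, which for $n_m\ge 2$ exceeds the threshold at which the inductive hypothesis is available, so the hypothesis simply cannot be invoked. Even granting a good colouring of $F\setminus V(Q)$, a blue induced $\bigcup_{i=1}^m K_{n_i}$ in the extended colouring may use one vertex of $Q$ together with vertices of $F\setminus V(Q)$, and excluding that configuration is not something the unstrengthened hypothesis provides. The statement you would actually need --- that every graph on fewer than $\sum_i\binom{n_i+1}{2}$ vertices admits a clique partition no transversal of which induces $\bigcup_i K_{n_i}$ --- is the entire content of the theorem, and ``plausibly proved by a greedy or extremal partition argument'' is not a proof. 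For $m=1$ the greedy extraction of pairwise disjoint cliques $K_n,K_{n-1},\dots,K_1$ (exactly as in the proof of Theorem \ref{pathscomplete} in this paper) does settle the base case; but the extension to unions is precisely where Kostochka and Sheikh's work lies, and your sketch does not reconstruct it. In summary: upper bound proved, lower bound an honestly flagged but unfilled outline, so the proposal as a whole is incomplete.
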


\begin{corollary}\label{p3tkn}
For any positive integer $t$, $\IR(P_{3},tK_{n})\ =\ t\IR(P_3,K_{n})=t\left(\binom{n}{2}+n\right)$.
\end{corollary}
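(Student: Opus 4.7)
The plan is to observe that this corollary is an immediate specialization of the preceding Kostochka--Sheikh theorem. In the statement of that theorem, I set $m = t$ and choose each $n_i = n$, so that $\bigcup_{i=1}^m K_{n_i}$ becomes exactly $tK_n$, the vertex-disjoint union of $t$ copies of $K_n$. The conclusion of the theorem then reads
\[
\IR(P_3, tK_n) \;=\; \sum_{i=1}^{t} \binom{n+1}{2} \;=\; t\binom{n+1}{2}.
\]

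The remaining step is a routine algebraic identity: $\binom{n+1}{2} = \frac{(n+1)n}{2} = \binom{n}{2} + n$. Substituting this gives $\IR(P_3, tK_n) = t\bigl(\binom{n}{2}+n\bigr)$. For the middle equality $t\IR(P_3, K_n)$, I apply the Kostochka--Sheikh theorem a second time in the degenerate case $m=1$, $n_1=n$, which yields $\IR(P_3, K_n) = \binom{n+1}{2} = \binom{n}{2}+n$; multiplying by $t$ closes the chain of equalities in the statement.

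Since the entire argument consists of specializing the quantifier over $(n_1,\dots,n_m)$ to a constant sequence and expanding a binomial coefficient, there is no serious obstacle here. The only thing to be a little careful about is presentation: I would state clearly that both the first and last equalities follow from the same theorem applied with different parameter choices, so that the corollary is properly justified rather than appearing as a mere restatement.
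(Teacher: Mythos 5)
Your proposal is correct and matches the paper exactly: the corollary is stated as an immediate specialization of the Kostochka--Sheikh theorem with $m=t$ and $n_1=\cdots=n_t=n$ (and $m=1$ for the middle equality), together with the identity $\binom{n+1}{2}=\binom{n}{2}+n$. The paper gives no separate proof, so your write-up is if anything slightly more explicit than the source.
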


\begin{theorem}\cite{kostoczka}\label{P3-multipartite}
Let  $H_i$, $i=1,2,\dots,m$ be  a complete multipartite graph. Then $\IR(P_{3},\bigcup_{i=1}^m H_{i})\ =\sum_{i=1}^m \IR(P_3,H_i).$
\end{theorem}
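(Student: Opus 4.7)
My plan is to prove the two inequalities separately, with the upper bound being routine and the lower bound requiring an inductive structural argument on $m$. For the upper bound $\IR(P_3,\bigcup_{i=1}^m H_i)\le \sum_{i=1}^m \IR(P_3, H_i)$, I imitate the disjoint union construction of Observation \ref{main}: pick graphs $F_i$ with $F_i\a (P_3, H_i)$ and $|V(F_i)|=\IR(P_3, H_i)$, and let $F$ be their vertex-disjoint union with no extra edges. Given any 2-coloring of $E(F)$, each $F_i$ inherits a coloring and contributes either a red induced $P_3$ (which remains induced in $F$, completing this case) or a blue induced $H_i$; in the latter case, since distinct $F_i$'s are pairwise non-adjacent in $F$, the $m$ blue copies combine into a blue induced $\bigcup_i H_i$.

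For the lower bound I induct on $m$, the case $m=1$ being tautological. The key claim driving the inductive step is: \emph{if $F\a(P_3,\bigcup_{i=1}^m H_i)$, then $V(F)$ admits a partition $V(F)=A\sqcup B$ such that $F[A]\a(P_3, H_m)$ and $F[B]\a(P_3,\bigcup_{i=1}^{m-1} H_i)$.} Granted the claim, $|V(F)|=|A|+|B|\ge \IR(P_3, H_m)+\sum_{i<m}\IR(P_3,H_i)$ by the inductive hypothesis on $F[B]$, which is the desired bound. To prove the claim I would exploit two structural features of complete multipartite graphs: each $H_i$ is connected, so any induced copy of $\bigcup_i H_i$ in $F$ splits into $m$ vertex blocks pairwise separated by $F$-non-edges; and vertices within a part of some $H_i$ are twins, permitting controlled modification of induced copies. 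Starting from any red-induced-$P_3$-free coloring of $F$ (for instance, the all-blue one), locate the blue induced $\bigcup_i H_i$ guaranteed by the arrowing property, take $A$ to be the vertex set of its $H_m$-component, possibly enlarged, and set $B=V(F)\setminus A$. The arrowing properties of $F[A]$ and $F[B]$ would then be verified by lifting bad colorings of these subgraphs to bad colorings of the full $F$ and invoking $F\a(P_3,\bigcup_i H_i)$.

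The main obstacle is this last extension step in the split claim. Given a 2-coloring of $F[A]$ avoiding red induced $P_3$ and blue induced $H_m$, one must extend it across the $A$-$B$ edges so that the full coloring of $F$ remains red-induced-$P_3$-free and does not admit a blue induced $\bigcup_i H_i$ whose $H_m$-component straddles $A$ and $B$. Neither the all-red nor the all-blue choice on the $A$-$B$ edges works uniformly: red may create an induced $P_3$ whose outer endpoints are non-adjacent in $B$, while blue may allow an $H_m$-copy to use vertices from both sides. I expect the resolution to rely essentially on the complete multipartite hypothesis -- specifically, that an external vertex adjacent to one vertex of a part of $H_m$ is heavily constrained in its adjacencies to the other vertices of that part -- with a careful case analysis on the twin structure of each part being the technical core of the proof.
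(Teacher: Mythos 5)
The paper does not prove this statement at all: it is quoted from Kostochka and Sheikh \cite{kostoczka} as a known result, so there is no internal proof to compare against. Judged on its own, your upper bound is correct and complete (it is exactly the disjoint-union argument of Observation \ref{main}, using that each $F_i$ is an induced subgraph of the union and that the blue copies of the $H_i$ lie in pairwise non-adjacent components, hence combine into an induced $\bigcup_i H_i$).

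The lower bound, however, is where the entire content of the theorem lives, and there you have a genuine gap. Your argument rests on the ``split claim'' that any $F$ with $F\a(P_3,\bigcup_{i=1}^m H_i)$ admits a vertex partition $A\sqcup B$ with $F[A]\a(P_3,H_m)$ and $F[B]\a(P_3,\bigcup_{i<m}H_i)$, but you never establish it: the step you yourself flag as ``the main obstacle'' --- extending a coloring of $F[A]$ with no red induced $P_3$ and no blue induced $H_m$ across the $A$--$B$ edges without creating a red induced $P_3$ or a blue induced $\bigcup_i H_i$ --- is precisely the crux, and ``I expect the resolution to rely on the complete multipartite hypothesis'' is a statement of hope, not a proof. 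Note also that the claim is delicate even in the smallest case: for $H_1=H_2=K_2$ it would assert that every $6$-vertex graph arrowing $(P_3,2K_2)$ splits into two triples each of which arrows $(P_3,K_2)$, and since among $3$-vertex graphs only $P_3$ itself arrows $(P_3,K_2)$ (the all-red coloring defeats $K_3$, a single edge, and the empty graph), this is already a strong structural assertion requiring justification. Moreover, your choice of $A$ as ``the vertex set of the $H_m$-component of some blue copy, possibly enlarged'' is not well defined enough to run the argument. The standard route for lower bounds of the form $\IR(P_3,\cdot)$ is instead to take an arbitrary $F$ on fewer than $\sum_i\IR(P_3,H_i)$ vertices and directly construct a colouring in which the red graph is a disjoint union of cliques of $F$ (which automatically excludes a red induced $P_3$) while the blue graph omits an induced $\bigcup_i H_i$; this avoids the unproved decomposition of the arrowing graph altogether. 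As it stands, your proposal proves only the easy inequality.
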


\begin{corollary}
Let  $H$ be a complete multipartite graph and $t$ be a positive integer.  Then $\IR(P_{3},tH)\ =\ t\IR(P_3,H)$.
\end{corollary}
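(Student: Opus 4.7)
The plan is essentially a one-line deduction from the preceding theorem (Theorem~\ref{P3-multipartite}), so the ``proof'' is really just specialisation. First I would observe that if $H$ is a complete multipartite graph, then setting $H_1 = H_2 = \cdots = H_t = H$ in Theorem~\ref{P3-multipartite} yields a valid instance, since each $H_i$ is complete multipartite. Under this choice, the vertex-disjoint union $\bigcup_{i=1}^{t} H_i$ is by definition the graph $tH$.

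Next I would substitute into the conclusion of Theorem~\ref{P3-multipartite}:
\[
\IR(P_3, tH) \;=\; \IR\!\left(P_3,\, \bigcup_{i=1}^{t} H_i\right) \;=\; \sum_{i=1}^{t} \IR(P_3, H_i) \;=\; \sum_{i=1}^{t} \IR(P_3, H) \;=\; t\,\IR(P_3, H).
\]
This is precisely the claim, so nothing further is needed.

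There is no real obstacle here, since the corollary is just the uniform case ($H_i$ all equal) of the more general disjoint-union result already stated. The only thing worth double-checking is the edge case $t = 1$, which is trivial, and the implicit assumption that $H$ itself is nonempty so that $\IR(P_3,H)$ is well defined; both are handled by the hypotheses inherited from Theorem~\ref{P3-multipartite}. Any genuine work (the matching upper-bound construction for $P_3$ versus a union of complete multipartite graphs, and the corresponding lower bound) is carried out in the proof of Theorem~\ref{P3-multipartite} itself and need not be revisited.
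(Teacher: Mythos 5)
Your proof is correct and is exactly the paper's (implicit) argument: the corollary is the uniform case $H_1=\cdots=H_t=H$ of Theorem~\ref{P3-multipartite}, and the paper states it without further proof for precisely this reason. Nothing is missing.
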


\begin{theorem} \cite{kostoczka}
For any positive integer $s$, $7s\geq \IR(P_{3},sP_{4})\ \ge 6.1s$.
\end{theorem}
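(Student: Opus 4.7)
For the upper bound $\IR(P_3, sP_4) \leq 7s$, the plan is to invoke Observation~\ref{main} applied to a suitable seven-vertex graph. Specifically, it suffices to exhibit a graph $H$ on seven vertices with $H \a (P_3, P_4)$, for then $sH$ strongly arrows $(P_3, sP_4)$ on $7s$ vertices and the bound follows. Finding such $H$ is a finite problem; I would search among seven-vertex graphs rich in induced copies of both $P_3$ and $P_4$. The key observation is that avoiding a red induced $P_3$ in $H$ is equivalent to demanding that every vertex's red neighborhood induces a clique in $H$, so the verification reduces to showing that, for every such cluster-graph red subgraph $R \subseteq E(H)$, at least one induced $P_4$ of $H$ has all three of its edges in $E(H) \setminus R$. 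This is performed by a case analysis on the structure of $R$ (empty, a matching, containing a triangle, a disjoint union of cliques, and so on).

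For the lower bound $\IR(P_3, sP_4) \geq 6.1s$, one must rule out any graph $F$ on fewer than $6.1s$ vertices with $F \a (P_3, sP_4)$. The induced-monotonicity $F \prec F' \Rightarrow (F \a \Rightarrow F' \a)$ does not reduce this to a single adversarial graph (in particular, $K_n$ has no induced $P_3$ or $P_4$ at all and trivially does not arrow), so the argument must be structural. My plan is to assume $F \a (P_3, sP_4)$ and design avoidance colorings of $F$ to derive a lower bound on $|V(F)|$. One chooses the red subgraph as a cleverly selected cluster graph on $V(F)$ -- such as a carefully placed matching, a union of triangles, or a mixture of both -- calibrated so as to hit as many induced $P_4$s of $F$ as possible. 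Then a counting or averaging argument shows that the blue subgraph contains fewer than $s$ vertex-disjoint induced $P_4$s unless $|V(F)| \geq 6.1s$. The particular coefficient $6.1 = 61/10$ suggests that the argument is sharp on a specific small configuration of about sixty-one vertices admitting a coloring with blue-$P_4$-packing number ten, yielding the improved per-copy ratio over the trivial $6s$ one would obtain by gluing together six-vertex non-arrowing gadgets.

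The main obstacle is the lower bound. The naive packing bound $|V(F)| \geq 4s$, obtained by taking red $= \emptyset$ and noting that $F$ must contain an induced $sP_4$, falls well short of $6.1s$, and bridging the gap requires delicate choice of the red cluster graph coupled with a precise understanding of how such a subgraph interacts with the family of induced $P_4$s of $F$. The upper bound, by contrast, becomes routine once a suitable seven-vertex arrowing graph is exhibited, though the verification of $H \a (P_3, P_4)$ for that candidate $H$ is itself a non-trivial case analysis.
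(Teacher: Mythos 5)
This theorem is quoted in the paper from Kostochka and Sheikh \cite{kostoczka} without proof, so there is no in-paper argument to compare yours against; judged on its own terms, your proposal is a plan rather than a proof, and both halves have genuine gaps. For the upper bound your reduction is the right one: by Observation \ref{main}, $sH \a (P_3, sP_4)$ for any seven-vertex $H$ with $H \a (P_3, P_4)$, and your reformulation of ``no red induced $P_3$'' as ``every vertex's red neighbourhood induces a clique in $H$'' is correct. But the entire content of the upper bound is the exhibition and verification of such an $H$, and you supply neither. This is not a formality: the natural first candidate $C_7$ does \emph{not} arrow $(P_3,P_4)$ --- colour the edges in positions $1,3,5$ of the cycle red; the red edges form a matching, so there is no red induced $P_3$, while the blue edges occupy positions $2,4,6,7$, whose longest run of consecutive edges has length two, so there is no blue induced $P_4$. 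The choice of $H$ therefore genuinely matters, and ``I would search among seven-vertex graphs'' leaves the theorem unproved.

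The lower bound is where the real work lies, and your proposal contains no argument for it. Saying that one should pick ``a cleverly selected cluster graph calibrated so as to hit as many induced $P_4$s of $F$ as possible,'' followed by ``a counting or averaging argument,'' describes the shape of essentially every lower-bound proof in this area without identifying the colouring, the quantity being counted, or the inequality that produces the coefficient $6.1$. Your guess that $6.1=61/10$ reflects a sharp sixty-one-vertex configuration is unsupported speculation rather than a derivation. As it stands, the only bound your outline actually delivers is the trivial $\IR(P_3,sP_4)\ge 4s$ that you yourself flag as insufficient.
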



\section{Induced Ramsey numbers $G$ versus $2K_2$}\label{Matchings}

Obviously  $\IR(G,K_2)=|V(G)|$. We consider  $\IR(sG,2K_2)$. The next theorem shows that if $s$ is large enough, the equality in \eqref{maineq} holds.

\begin{theorem}\label{g2k2}
Let $G$ be a connected graph and $s\ge |V(G)|$. Then $\IR(sG,2K_2)=(s+1)|V(G)|= (s+1)\IR(G, K_2)$.
\end{theorem}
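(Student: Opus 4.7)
The upper bound $\IR(sG,2K_2)\le(s+1)n$ is immediate from Observation \ref{main} (with $t=2$) combined with the trivial fact $\IR(G,K_2)=|V(G)|=n$. The substance of the theorem is thus the matching lower bound: for every graph $F$ with $|V(F)|\le(s+1)n-1$ I must exhibit a red/blue coloring of $E(F)$ containing neither a red induced $sG$ nor a blue induced $2K_2$.

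If $F$ contains fewer than $s$ vertex-disjoint induced copies of $G$, I color all edges red and finish. Otherwise, since $|V(F)|<(s+1)n$, the maximum number of vertex-disjoint induced copies of $G$ equals exactly $s$. Fix such a collection $G_1,\dots,G_s$, put $W=V(G_1)\cup\cdots\cup V(G_s)$, and set $R=V(F)\setminus W$, so $|R|\le n-1<s$. My intended coloring is the simplest possible: pick a single vertex $v$, color blue every edge incident to $v$, and color every other edge red. The blue edges form a star at $v$ and are trivially $2K_2$-free; because $G$ is connected on $n\ge 2$ vertices, every induced copy of $G$ containing $v$ uses at least one edge incident to $v$, hence every red induced copy of $G$ lies in $F-v$. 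Thus the coloring works whenever $F-v$ contains fewer than $s$ vertex-disjoint induced copies of $G$.

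The crux of the proof is therefore to locate such a vertex $v$. A vertex of $R$ will not do, since $G_1,\dots,G_s$ survive intact in $F-v$, so I must find $v\in W$ --- specifically, I will argue some $v\in V(G_1)$ works. Assume for contradiction that for every $v\in V(G_1)$ the graph $F-v$ contains an induced $sG$ on a vertex set $U_v\subseteq V(F)\setminus\{v\}$, so that $|V(F)\setminus U_v|\le n-1$. Since each $G_j$ is a connected induced copy of $G$, whenever $V(G_j)\subseteq U_v$ it must coincide with one of the $s$ components of $F[U_v]$ (a connected induced $G$ inside an $sG$ is necessarily one of its components). Because the residue $V(F)\setminus U_v$ contains $v\in V(G_1)$ and has at most $n-1$ vertices, at most $n-2$ of the copies $G_2,\dots,G_s$ are broken in $U_v$, so at least $s-n+1$ of them persist and at most $n-1$ ``new'' induced $G$-copies must complete the $sG$ structure of $F[U_v]$, each confined to $(V(G_1)\setminus\{v\})\cup R$ together with the broken $G_j$'s. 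Letting $v$ range over the $n$ vertices of $V(G_1)$ and using $s\ge n$, a careful pigeonhole / exchange argument on the placement of these new components across the different $U_v$'s will produce $s+1$ pairwise vertex-disjoint induced copies of $G$ in $F$, contradicting the maximality of $(G_1,\dots,G_s)$. The main technical obstacle is exactly this bookkeeping step, since the new components may redistribute vertices across the broken $G_j$'s, $V(G_1)\setminus\{v\}$, and $R$ in many ways, and one must exploit both the $n$ alternative configurations $U_v$ ($v\in V(G_1)$) and the hypothesis $s\ge n$ to rule out every purely internal rearrangement within $W$.
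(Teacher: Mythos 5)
Your upper bound is fine, and your reduction of the lower bound to finding a vertex $v$ such that $F-v$ contains no induced $sG$ is a legitimate (and in fact achievable) target: one can show that some vertex lies in every induced copy of $sG$, and then your star colouring works. The problem is that you have not proved this, and the step you defer --- ``a careful pigeonhole / exchange argument \dots will produce $s+1$ pairwise vertex-disjoint induced copies of $G$'' --- is precisely where all the difficulty of the theorem is concentrated. Worse, the specific plan you outline cannot work, because you localize the search to $V(G_1)$ for a fixed collection $G_1,\dots,G_s$. Take $G=K_2$, $s=n=2$, and let $F$ have vertex set $\{a_1,b_1,y,a_2,b_2\}$ and edges $a_1b_1,\ a_1y,\ b_1y,\ a_2b_2$ (a triangle plus a disjoint edge), with $G_1=a_1b_1$, $G_2=a_2b_2$, $R=\{y\}$. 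Then $F-a_1$ and $F-b_1$ each contain an induced $2K_2$, so your hypothesis ``for every $v\in V(G_1)$ the graph $F-v$ contains an induced $sG$'' holds, yet $F$ has only $5$ vertices and cannot contain $3$ disjoint copies of $K_2$. Hence no bookkeeping can derive your intended contradiction from that hypothesis alone; the good vertex here lives in $G_2$, not $G_1$, and identifying the correct component is the real work. (A smaller issue: a maximum family of vertex-disjoint induced copies of $G$ need not form an induced $sG$, so you should work with an induced copy of $sG$ from the start.)

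The paper does exactly this identification. Assuming $F$ strongly arrows $(sG,2K_2)$, it colours one edge of each $G_i$ blue in turn to force, for each $i$, a bundle containing a copy of $G$ meeting both $X_i=V(G_i)$ and the leftover set $Y$; it then builds an auxiliary bipartite graph between $Y$ and $\{X_1,\dots,X_s\}$ and exploits $|Y|=n-1<s$ (this is where $s\ge|V(G)|$ enters) to extract a minimal set $X'$ violating Hall's condition, from which a counting argument on a suitably chosen bundle yields the contradiction. To salvage your approach you would need to let $v$ range over all of $W$ and supply an argument of comparable strength; as written, the proof has a genuine gap at its central step.
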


\begin{proof}
The inequality $\IR(sG,2K_2)\leq (s+1)|V(G)|$  follows from   \eqref{maineq}.  We shall show next that  $\IR(sG,2K_2)\geq (s+1)|V(G)|$ by proving that any graph on  $(s+1)|V(G)|-1$
vertices  can be edge-colored red and blue such that there is  neither red induced copy of  $sG$ nor blue induced copy of $2K_2$. Let $F$ be an arbitrary graph on $(s+1)|V(G)|-1$ vertices. 
We say that an induced subgraph of $F$ isomorphic to $sG$ is a {\it bundle}. 
Assume that any red-blue edge coloring of $F$ contains either a red bundle of a blue $2K_2$ as an induced subgraph.\\

We see that $F$ contains at least one bundle, otherwise we can color all edges of $F$ red.
Let $G_1, \ldots, G_s$ be copies of $G$ forming a bundle, with respective vertex sets $X_1, \ldots, X_s$, let $Y$ be the set of remaining vertices, i.e., $Y = V(F)-V(G_1\cup \cdots \cup G_s)$.  For any other bundle in $F$, we say that this bundle intersects $X_i$ nontrivially if it does not contain $X_i$ in is vertex set and thus does not contain respective induced copy of $G$. Note that each bundle contains at least one vertex from each $X_i$ because otherwise the total number of vertices in the bundle is at most $s|G|+s-1 -|G|< s|G|$.
Note that $|Y|= |V(G)|-1\leq s-1$.\\

For a fixed $i\in \{1, \ldots, s\}$, color an edge of $G_i$ blue and color all other edges of $F$ red. Then we see that there is a red bundle $H_i$. This bundle 
intersects $X_i$ nontrivially. Thus the bundle contains a copy of $G$ with vertices in $X_i$ and $Y$.  Since $G$ is connected,  there is an edge between $X_i$ and $Y$ for each 
$i=1, \ldots, s$.\\

Let $Q$ be an auxiliary bipartite graph with one part $Y$ and another  $X=\{X_1, \ldots, X_s\}$  and 
$X_iy\in E(Q)$ iff there is a bundle containing a copy of $G$ with an edge between $y$ and $X_i$. By the previous remark $|N_Q(X_i)|\geq 1$.
Consider a smallest subset $X'$ of $X$ such that $|N_Q(X')|<|X'|$.   Note that $X'$ is well defined since $N_Q(X)\subseteq Y$ and $|Y|<|X|$. We see that $|X'|>1$.  \\

Consider a bundle $H$ intersecting  the largest number $t$ of $X_i$'s nontrivially,  for $X_i\in X'$.
Let $X'= X''\cup X'''$, where for each $X_i\in X''$,  $H$ intersects $X_i$ nontrivially, and for each $X_i\in X'''$, $X_i\subseteq V(H)$, i.e.  $G_i\subseteq H$.
Let $H''$ be a union of copies of $G$ from $H$ that intersect members of  $X''$. Observe that $H''=t'G$ for $t'\geq t=|X''|$. Indeed, otherwise 
 $$|\cup_{X_i\in X''} X_i - V(H)|\geq  t|G| - t'(|G| -1)\geq t|G|-(t-1)|G| \geq |G|.$$ Thus the number of vertices of $F$ not in $H$ is at least  $|G|$, a contradiction.
 Since each copy of $G$ in $H''$ has a vertex in $Y$,  we see that $|V(H'')\cap Y| \geq |X''|$.
 Consider $X'''$.  Since there are no edges of $F$ between $\cup_{X_i\in X'''} X_i$ and $V(H)\cap Y$ and there are no edges of $Q$ between $X'$ and $Y - N(X')$, 
 $N_Q(X''') \subseteq N(X') - (V(H'')\cap Y)$. Thus $|N_Q(X''')|\leq |N(X')| - |X''| < |X'| - |X''|= |X'''|$, a contradiction to minimality of $X'$.
\end{proof}

In the next theorem we show  the lower bound on the induced Ramsey number for a pair $(G,2K_2)$.
\begin{theorem}\label{g2k2lower}
Let $G$ be a  graph without isolated vertices. Then $$\IR(G,2K_2)\ge |V(G)|+2.$$
\end{theorem}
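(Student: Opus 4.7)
The plan is to exhibit, for each graph $F$ on $n+1$ vertices (where $n:=|V(G)|$), an edge colouring of $F$ producing neither a red induced $G$ nor a blue induced $2K_2$. If $G \not\prec F$, the all-red colouring works trivially, so assume $G \prec F$; fix an induced copy of $G$ on a vertex set $X\subset V(F)$, and let $y$ be the unique vertex in $V(F)\setminus X$.

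The construction I have in mind is: colour blue every edge of $F$ incident to $y$, together with one additional edge $ab$ with $a,b\in X$; colour all other edges red. The star at $y$ alone cannot yield a blue induced $2K_2$ (any two of its edges share $y$), so the only worry is that the interior edge $ab$ might combine with some edge $yc$ to induce a $2K_2$ on $\{a,b,c,y\}$. To defuse this I would choose the endpoint $a$ adjacent to $y$ whenever possible: if $N_F(y)\neq\emptyset$ I pick $a\in N_F(y)$ and then choose $b\in X$ adjacent to $a$ in $F$, which is possible because $a$ is non-isolated in $F[X]\cong G$; if $y$ is isolated in $F$, I let $ab$ be any edge of $F[X]$, which exists because $G$ has no isolated vertex.

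Once $a\in N_F(y)$ is arranged, the edge $ya$ automatically lies inside any candidate offending four-set $\{a,b,c,y\}$ with $c\in N_F(y)\setminus\{a,b\}$, so that set spans at least the three edges $ab, yc, ya$ in $F$ and is therefore not a $2K_2$. For the absence of a red induced $G$: any $n$-set $S$ with $F[S]\cong G$ either avoids $y$, in which case $S=X$ and the blue edge $ab$ lies inside $F[S]$, or contains $y$, in which case $G$'s lack of isolated vertices forces $y$ to have a neighbour in $F[S]$, and every such edge is blue by construction.

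The only delicate step is this choice of $ab$; the idea of placing one endpoint of the extra blue edge inside $N_F(y)$ is what forces a ``diagonal'' third edge into any potentially bad four-set and trivialises the $2K_2$ analysis. The remainder of the verification then reduces to the hypothesis that $G$ has no isolated vertex.
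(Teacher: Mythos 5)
Your proposal is correct and is essentially the paper's own argument: colour the star at the extra vertex $y$ blue plus one more edge of the induced copy of $G$ chosen at a neighbour of $y$ when one exists, and observe that the resulting blue graph contains no induced $2K_2$ while every induced copy of $G$ in $F$ meets a blue edge. Your write-up just makes the verification of the $2K_2$ case slightly more explicit than the paper does.
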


\begin{proof}
Let $G$ have $n$ vertices. Consider an arbitrary graph $F$ on $n+1$ vertices. 
We shall show that $F$ can be edge-colored so that there is no induced red $G$ and no induced blue $2K_2$ in $F$.
If there is no induced $G$ in $F$, color all edges of $F$ red. So assume there is an induced copy $G'$  of  $G$. Consider the vertex $v$ of $F$ not in $G'$.
Let $u$ be a vertex of $G'$ incident to $v$ if such exists, let $u$ be an arbitrary vertex of $G'$ otherwise.
Color all edges incident to $v$ blue and  color one edge of $G'$ incident to $u$  blue; color all other edges red. 
This is a desired coloring, so $\IR(G, 2K_2)>n+1$.
\end{proof}

\begin{corollary}
Let $G$ be any graph on $n$ vertices and no isolated vertices.
Then $n+2 \leq  \IR(G, 2K_2) \leq 2n.$ Moreover both bounds can be  attained.
\end{corollary}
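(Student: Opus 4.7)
The plan is to combine the two inequalities from Theorem~\ref{g2k2lower} and Observation~\ref{main}, and then to exhibit concrete extremal families for each side.

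For the upper bound I would apply Observation~\ref{main} with $s=1$, $t=2$, $H = K_2$, which gives $\IR(G, 2K_2) \le 2\,\IR(G, K_2)$. Since $G$ itself strongly arrows $(G, K_2)$ (colouring every edge red produces a red induced copy of $G$, while any blue edge is an induced $K_2$), we have $\IR(G, K_2) = n$, so $\IR(G, 2K_2) \le 2n$. The lower bound $\IR(G, 2K_2) \ge n+2$ is precisely Theorem~\ref{g2k2lower}, so the inequalities themselves require no further work.

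For attainment I would cite two families that appear elsewhere in the paper. The path $P_n$ realises the lower bound for every $n \ge 5$, by the identity $\IR(P_n, 2K_2) = n+2$ announced in the introduction and proved in Section~\ref{Matchings}. The complete graph $K_n$ realises the upper bound: specialising the Gorgol--{\L}uczak formula $\IR(sK_2, K_n) = sn$ to $s = 2$ and using the obvious symmetry $\IR(G, H) = \IR(H, G)$ (swap the two colour classes in any strongly arrowing graph) yields $\IR(K_n, 2K_2) = 2n$. Both $P_n$ and $K_n$ have no isolated vertices, so both fall within the hypothesis of the corollary.

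Since the corollary merely repackages results already established, there is no real obstacle; the only substantive decision is which extremal families to exhibit, and $P_n$ and $K_n$ are the natural witnesses sitting at opposite ends of the density spectrum.
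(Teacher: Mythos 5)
Your proposal is correct and follows essentially the same route as the paper: the lower bound from Theorem~\ref{g2k2lower}, the upper bound from Observation~\ref{main} together with $\IR(G,K_2)=n$, and the same two extremal families ($P_n$ with $n\ge 5$ for the lower bound, $K_n$ via the Gorgol--{\L}uczak result and colour-swapping symmetry for the upper bound). The only cosmetic difference is that the paper additionally notes $\IR(K_2,2K_2)=4$ as a single graph attaining both bounds at once, which your two families make unnecessary.
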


\begin{proof}
Theorem \ref{g2k2lower} gives the lower bound and Observation \ref{main} implies $\IR(G, 2K_2)\leq 2 \IR(G, K_2)=2n$, giving an upper bound.  Since \linebreak $\IR(K_2, 2K_2)=4$, both bounds are tight.
We can see that the bounds are tight for some graphs $G$ with  arbitrarily many vertices since \linebreak
$\IR(P_n, 2K_2)=n+2$, for $n\geq 5$, as we shall see in the next section and 
$\IR(K_n, 2K_2)= 2n$ by \cite{gorgolluczak}.
\end{proof}

In Theorem \ref{paths} we show that this lower bound is sharp for instance when $G$ is a path on at least five vertices.


~\\
\section{Paths and matchings}  \label{path}

Let for $a$, $b$ positive integers the symbol $\rem(a,b)$ denotes the reminder of a division $a$ by $b$.
~
\begin{theorem}\label{paths}
Let $n$, $s$, $t$ be positive integers. Then

\begin{enumerate}
\item{}
 $\IR(P_n,2K_2)=\begin{cases}
                           n+3, \ n=3,4,\\ 
                          n+2, \ n\ge 5,
                  \end{cases}$
 \item{}                 
 $\IR(sP_n,2K_2)\leq  sn+s+1, \ 2\le s\leq n-1$, $n\ge 4$,
 
  \item{}                 
 $\IR(sP_n,2K_2) = sn+s+1, \  s = 2,3$, $n\ge 4$,

\item{}                          
$\IR(sP_n,2K_2)=(s+1)n, \ s\ge n$, $n\ge 4$,

\item{}
$\IR(sP_3, 2K_2) = 3s+3 = (s+1) \IR(P_3, K_2)$, $s\ge 1$.
                 
\item{}
$\IR(P_3,tK_2)=3t$,   $t\ge 1$,

\item{}
$3t+1 \leq \IR(P_4,tK_2)\leq 7\lfloor t/2\rfloor +4\cdot \rem(t,2)$, $t\geq 1$,

\item{}
 $\IR(P_n,tK_2)\le \lceil t/2\rceil n +t -\rem(t,2)$,  $n\ge 5$,  $t\ge 1$,

\item{}
$\IR(P_{3},tP_{3})\ =\ 4t$,  $t\ge 1$.

\end{enumerate}
\end{theorem}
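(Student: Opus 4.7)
The plan is to reduce the general equality $\IR(P_3, tP_3) = 4t$ to the single-copy base value $\IR(P_3, P_3) = 4$ by invoking Theorem~\ref{P3-multipartite}, which is already available from Section~\ref{Known}.

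First I would establish $\IR(P_3, P_3) = 4$. For the upper bound I would verify directly that $K_{1,3} \a (P_3, P_3)$: in any red--blue edge colouring of $K_{1,3}$, the pigeonhole principle forces two of the three edges to share a colour; those two edges meet at the centre, and since the three leaves of $K_{1,3}$ are pairwise non-adjacent, they induce a monochromatic $P_3$. For the matching lower bound I would note that among graphs on at most three vertices, the only one that even contains an induced $P_3$ is $P_3$ itself, and colouring its two edges in different colours shows $P_3 \na (P_3, P_3)$.

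Next, since $P_3 = K_{1,2}$ is complete bipartite and therefore complete multipartite, I would apply Theorem~\ref{P3-multipartite} with $H_1 = H_2 = \cdots = H_t = P_3$, obtaining
\[
\IR(P_3, tP_3) \;=\; \sum_{i=1}^{t} \IR(P_3, P_3) \;=\; 4t,
\]
which is the desired equality.

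The only nontrivial ingredient is the base case $\IR(P_3, P_3) = 4$, and as sketched this is a short pigeonhole argument on $K_{1,3}$ together with a one-line check on three-vertex graphs. Beyond this, no induction on $t$, no tailored strongly arrowing construction for $tP_3$, and no structural analysis of $(4t-1)$-vertex graphs is needed, since Theorem~\ref{P3-multipartite} supplies both the upper and the lower bound simultaneously.
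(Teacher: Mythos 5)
Your argument for item (9) is correct and is essentially the paper's own proof: the paper disposes of this part in one line by citing Theorem~\ref{P3-multipartite} with the observation that $P_3=K_{2,1}$ is complete multipartite, exactly as you do. Your additional verification of the base value $\IR(P_3,P_3)=4$ via $K_{1,3}$ (pigeonhole on the three edges of the star, plus the observation that $P_3$ itself is the only $3$-vertex graph containing an induced $P_3$ and can be split into one red and one blue edge) is sound and in fact makes explicit a step the paper leaves implicit.

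The genuine gap is one of scope: the statement you were asked to prove is the full nine-part theorem, and your proposal addresses only item (9). Items (1)--(8) --- the values of $\IR(P_n,2K_2)$, the bounds on $\IR(sP_n,2K_2)$ for various ranges of $s$, and the results on $\IR(P_n,tK_2)$ --- are where nearly all of the work lies, and none of them follows from Theorem~\ref{P3-multipartite}, since $2K_2$ and $tK_2$ are not connected (let alone complete multipartite) and the first graph in the pair is not always $P_3$. These parts require separate constructions (e.g.\ showing $C_{n+2}\a(P_n,2K_2)$ for $n\ge 5$) and separate adversarial colourings for the lower bounds, none of which your reduction supplies. If the intended target was only item (9), your proof is complete and matches the paper; if it was the whole theorem, eight of its nine claims remain unproved.
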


\begin{proof}

(1)  To show that $\IR(P_n, 2K_2) \leq n+2$ for $n\geq 5$, 
we shall prove that $C_{n+2}\a (P_n, 2K_2)$. Consider an edge-coloring of $C$ in red and blue. 
If there is a red $P_n$, we are done. Assume that there are no red $P_n$'s.
Since deleting any two consecutive vertices in $C$ leaves $P_n$ and it is not red, we see using that fact that $n+2\geq 7$
that there are  two blue edges at distance at least two in $C$. These edges form induced $2K_2$.
Thus $\IR(P_n, 2K_2)\leq n+2$.   The lower bound comes directly from Theorem \ref{g2k2lower}.

 We know that $\IR(P_3,2K_2)=6$ (cf. Corollary \ref{p3tkn}).
 In turn for $(P_4,2K_2)$ the argument above shows that $C_{7}\a (P_4,2K_2)$. Consider a graph $F$ on $6$ vertices. We shall show that it can be colored with no  red induced $P_4$ and no blue induced $2K_2$. We can assume that there is an induced copy $P$ of $P_4$ in $F$, otherwise we can color all edges red. 
 Let $ P=(x_1, x_2, x_3, x_4)$ and  let $x, y$ be the vertices of $F$ not in $P$.

If $xy\in E(F)$,   color $xy$, $x_1x_2$, and $x_3x_4$ red and all remaining edges  blue. Then there is no red $P_3$ and any blue edge is incident to $x_2$ or $x_3$ that are adjacent, thus there is no induced blue $2K_2$.

If $xy \not\in E(F)$ and $x,y, x_1,x_4$ induce a $P_4$, say with $x_4$ being an endpoint of this $P_4$,  color all edges  incident to $x_3$ and all edges incident to $x_4$ blue, and the remaining edges red. Then the red edges form a star, thus there is no red  $P_4$, each of the blue edges is incident to one of two adjacent vertices, thus there is no blue induced $2K_2$. 

If $xy \not\in E(F)$ and $x,y, x_1,x_4$ does not induce a $P_4$, color all edges incident to $x_2$ and all edges incident to $x_3$ blue, and all other edges red. As before we see that there is no induced blue $2K_2$. The red edges are spanned by 
$x,y, x_1, x_4$, that does not induce a $P_4$.

Thus $\IR(P_4, 2K_2)>6$. 
Together with the upper bound, we get  $\IR(P_4,2K_2)=7$.\\

(2)
Let  $ \ 2\le s \le n-1$.  Note that $sP_n\prec P_{sn+s-1}$. To see this, delete every $(n+1)^{\rm st}$ vertex from $P_{sn+s-1}$.
Thus $\IR(sP_n,2K_2)\leq \IR(P_{sn+s-1}, 2K_2)$. By item (1) we have that $\IR(P_{sn+s-1}, 2K_2)\leq sn+s+1$.
Therefore $\IR(sP_n,2K_2)\leq sn+s+1$.\\

(3) The upper bound follows directly from item (2). 

As for the lower bound consider a graph $F$ on $ns +s $ vertices. It contains a bundle $B$ (induced copy
of $sP_n$). Let $Y$ be the set of all vertices of $F$ not in $B$, i.e., $|Y|=s$.

Fix any component $P$ in $B$ and color any three consecutive edges on it blue and all the remaining edges of $F$ red.
Then we see that there must be a red bundle, otherwise we are done. Let this bundle
be $B_1$. We see that $B_1$ can use at most $n-2$ vertices from $P$, so it uses at least two vertices of $Y$. 

Assume that there is exactly one vertex of $Y$ adjacent to $P$. 
If $|V(P)-V(B_1)| = 3$, then $s = 3$ and $Y$ is contained in $V(B_1)$. Moreover, two other paths of $B$ are components of $B_1$, so they have no neighbors in $Y$, a contradiction.
If $|V(P)-V(B_1)| = 2$, then the remaining $n-2$ vertices of $P$ together with the only one vertex from $Y$ can induce a path on at most $n-1$ vertices, so it could not be a component of $B_1$.

So we can assume that there are at least two vertices in $Y$ sending edges to  $P$.  If $B_1$ uses
at most $n-1$ vertices from each of the remaining $s-1$ components of $B$, we see
that  $B_1$ omits at least $2+ (s-1)=s+1$ vertices of $F$, i.e., it contains at most
$ns-1$ vertices, a contradiction.  Thus, there is a component $P'$  of $B$   so that all its vertices are contained in
$B_1$. Since $B_1$ is an induced subgraph of $F$,  $P'$ is a component of $B_1$ as
well. Since $P$ could be chosen to be an arbitrary component of $B$, let $P=P'$.
We see that on one hand $P'$ sends edges to at least two vertices of $Y$, on the other hand, it does not send edges to $Y\cap B_1$.
Since $|Y\cap B_1|\geq 2$, there are at least two vertices of $Y$ that send edges to $P'$ and at least two vertices of $Y$ that do not send edges to $P'$. So $|Y|\geq 4$, a contradiction to the fact that $|Y|=s\in \{2,3\}$.\\

(4) By Theorem \ref{g2k2} $\IR(sP_n,2K_2)=(s+1)n$ for $s\ge n$. \\

(5) The upper bound $\IR(sP_3, 2K_2)\leq 3s+3$ follows from Observation \ref{main}.
For the lower bound, consider a graph $F$ on $3s+2$ vertices. We shall show that $F$ can be edge-colored so that there is no induced red $sP_3$ and no induced blue $2K_2$. We can assume that $sP_3 \prec F$ otherwise  we can color all edges of $F$ red. Let $a_i$, $b_i$, $c_i$ be the vertices of $i$-th path $P_3$ and $x$ and $y$ be the remaining vertices of $F$.

Assume first that  for some $i\in [s]$, there is an edge between  $\{a_i, c_i\}$ and $\{x, y\}$, assume w.l.o.g., that $a_1x\in E(F)$. Color all edges incident to $a_1$ and $b_1$ blue, the rest of the edges red. Then there is no blue $2K_2$ and we must have a red copy $B$ of $sP_3$. We see that $B$ could contain at most $2$ vertices from $\{x,a_1, b_1, c_1\}$ otherwise it would induce a blue edge. Since $|V(B)|= |V(F)|-2$,  $B$ contains exactly two vertices from $\{x,a_1, b_1, c_1\}$ and thus contains $y$ and 
all paths $(a_i, b_i, c_i)$, $i=2, \ldots, s$. Thus there is a red $P_3$ induced by $\{x,y, a_1, b_1, c_1\}$ and containing $y$.
Since all edges incident to $a_1$ and $b_1$ are blue,  it must be induced by $\{x, y, c_1\}$.
Then we see that  $F[\{x,y, a_1, b_1, c_1\}]$ contains a $C_4$ with a pendant edge or a $C_5$ and thus does not induce $2K_2$. Color $F[\{x,y, a_1, b_1, c_1\}]$ blue and the rest red, it results in a desired coloring.

Now, we can assume that $\{x, y\}$ sends edges only to vertices in $\{b_1, \ldots, b_s\}$. Assume w.l.o.g.,  that $b_1x\in E(F)$. Color all edges incident to $x$ and to $b_1$ blue, the rest red.
This is a desired coloring.\\

(6) According to Corollary \ref{p3tkn} $\IR(P_3,tK_2)=3t$.\\

(7) By item $(1)$ we have that $C_7\a (P_4,2K_2)$.

 Hence $ (t/2)  C_7\a (P_4, tK_2)$ for $t$ even and $\lfloor t/2\rfloor C_7\cup P_4\a (P_4, tK_2)$ for $t$ odd. This gives the upper bound.

As for the lower bound let $F$ be an arbitrary graph on $3t$ vertices. We shall prove that there is an edge coloring of $F$ in two colors with no red induced $P_4$ and no blue induced $tK_2$. First observe that $tK_2 \prec F$ otherwise we could color all edges of $F$ blue. Let $x_iy_i$, $i=1,2,\dots,t$, be the edges of this induced matching and $z_i$, $i=1,2,\dots,t$ be the remaining vertices of $F$. Color all edges $x_iy_i$ red. We can assume that there is another induced blue $tK_2$ otherwise we color the remaining edges blue and obtain the desired coloring. As we can take exactly one vertex from each $x_iy_i$ to construct this new matching $M$, vertices $z_i$ form an independent set and all are involved in $M$. Without loss of generality we can assume that $x_iz_i$, $i=1,2,\dots,t$, are the edges of $M$. We color all of them red. So again we can assume that there is one more induced matching $M_1=tK_2$. The only possibility is that each $z_i$, $i=1,2,\dots,t$ has exactly one neighbor in $\{y_1,y_2,\dots,y_t\}$. Therefore $F$ consists of induced cycles of length divisible by $3$, i.e. $F=\bigcup_k C_{3t_k}$. We shall show that induced $C=C_{3\tau}$ can be colored without red induced $P_3$ and  blue induced $\tau K_2$.   If $\tau=1$, color $C= C_3$ red. If $\tau>1$,  color $C=C_{3\tau}$ so that the red subgraph  forms matching on all but at most one vertex of $C$. Then there is no red $P_3$ and the blue subgraph forms a disjoint union of edges and perhaps one $P_3$. Since consecutive blue edges on $C$ do not form an induced $2K_2$, the largest induced blue matching has at most $\lfloor \frac{3\tau + 1}{4} \rfloor< \tau$ edges. 

Let $M_k$ be the largest  induced blue matching in $C_{3t_k}$. Since $F=\bigcup_k C_{3t_k}$, the largest induced blue matching in $F$ has the cardinality
$\sum_k |M_k| < \sum_k t_k = t$, which completes the proof of the lower bound.\\

(8)  From item $(1)$ we have that $C_{n+2}\a (P_n,2K_2)$, $n\ge 5$. 
Hence $ (t/2)  C_{n+2}\a (P_n, tK_2)$ for $t$ even and $\lfloor t/2\rfloor C_{n+2}\cup P_n\a (P_n, tK_2)$ for $t$ odd. This gives the upper bound.\\

(9) This follows immediately from Theorem \ref{P3-multipartite} since $P_3=K_{2,1}$.

\end{proof}

\section{Short paths and complete graphs}\label{Paths-cliques}

As we mentioned  Kostochka and Sheikh showed that 
$\IR(P_{3},sK_{n})=s\IR(P_3,K_{n})=s\binom{n}{2}+sn$. We consider the case when there are multiple copies of  $P_3$ and one copy of $K_n$ instead.

\begin{theorem}\label{pathscomplete}
Let $s\ge 1$ and $n\ge 3$. Then 
$$\binom{n+1}{2} + (2s-2)(n-1) \leq \IR(sP_{3},K_{n})\leq s \IR(P_3, K_n) = s\binom{n}{2}+sn.$$
\end{theorem}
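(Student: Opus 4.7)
The upper bound is immediate from Observation~\ref{main} applied to the pair $(G,H)=(P_3,K_n)$ with $t=1$, combined with the Kostochka--Sheikh identity $\IR(P_3,K_n)=\binom{n}{2}+n=\binom{n+1}{2}$:
\[
\IR(sP_3,K_n)\;\le\; s\,\IR(P_3,K_n)\;=\; s\binom{n}{2}+sn.
\]

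For the lower bound my plan is to show that every graph $F$ on
\[
M\;:=\;\binom{n+1}{2}+(2s-2)(n-1)-1
\]
vertices admits a red/blue edge colouring with no red induced $sP_3$ and no blue $K_n$, in the style of Theorem~\ref{g2k2lower}. I would proceed by induction on $s$. The base case $s=1$ is exactly the lower-bound half of Kostochka--Sheikh: every $F$ with $|V(F)|<\binom{n+1}{2}$ admits such a colouring.

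For the inductive step, given $F$ on $M$ vertices, the plan is to isolate a $2(n-1)$-vertex ``gadget'' $W\subseteq V(F)$ together with a distinguished ``anchor'' vertex $v\in W$, and to colour the $F$-edges incident to $W$ so that (i) no blue $K_n$ meets $W$, and (ii) every red induced $P_3$ of $F$ meeting $W$ passes through $v$. Once such a partial colouring is installed, the inductive hypothesis applied to $F[V(F)\setminus W]$, which has exactly $M-2(n-1)=\binom{n+1}{2}+2(s-2)(n-1)-1$ vertices, supplies a colouring of the remainder with no red induced $(s-1)P_3$ and no blue $K_n$. Pasting the two colourings together yields at most $(s-2)+1=s-1$ pairwise vertex-disjoint red induced $P_3$'s in $F$, and hence no red induced $sP_3$, while the blue clique number remains below $n$.

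The main obstacle is exhibiting the gadget $(W,v)$ uniformly for an arbitrary $F$. I envisage a case split driven by the local structure of $F$. If $F$ has a vertex of sufficiently small degree, take that vertex as $v$ and choose the other $2n-3$ vertices of $W$ among its non-neighbours, so that the red edges incident to $W$ can be routed through $v$ and every new red induced $P_3$ meeting $W$ is forced to use $v$. Otherwise every vertex of $F$ has large degree, and a Ramsey/Tur\'an-type argument supplies a dense substructure on $2(n-1)$ vertices inside which one can install a gadget built as two blue $K_{n-1}$'s joined by a red perfect matching: the internal red subgraph is a matching (hence contains no induced $P_3$) and each monochromatic blue clique has only $n-1$ vertices, with $v$ chosen as a matching endpoint used to funnel the cross-edges. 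The most delicate step, which I expect to require some care, is verifying in each case that the edges between $W$ and $V(F)\setminus W$ can be coloured to avoid both blue $K_n$'s spanning the two sides and additional pairwise vertex-disjoint red induced $P_3$'s crossing into $V(F)\setminus W$.
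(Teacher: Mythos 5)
Your upper bound is exactly the paper's (Observation~\ref{main} plus Kostochka--Sheikh), and your inductive framework for the lower bound is at least numerically aimed at the right target: removing $2(n-1)$ vertices per step accounts precisely for the term $(2s-2)(n-1)$ on top of the base case $\binom{n+1}{2}$. The logical skeleton is also sound: \emph{if} a gadget $(W,v)$ with properties (i) and (ii) always exists, then at most one component of a red induced $sP_3$ can meet $W$ and the induction closes. But the gadget construction is the entire content of the lower bound, and you do not supply it; you explicitly defer ``the most delicate step.'' Worse, there is a concrete obstruction to the plan as stated. Because you install the colouring of the edges incident to $W$ \emph{before} the inductive colouring of $F[V(F)\setminus W]$ is chosen, you cannot rule out a red induced $P_3$ of the form $w$--$x$--$y$ with $w\in W\setminus\{v\}$ and $x,y\notin W$: the edge $xy$ lies entirely in $F\setminus W$ and is coloured by the inductive hypothesis, over which you have no control. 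The only way to kill all such paths uniformly is to colour every boundary edge at $W\setminus\{v\}$ blue; but then a single vertex $w\in W$ together with a blue $K_{n-1}$ inside the inductive colouring of $F\setminus W$ (which the inductive hypothesis does \emph{not} forbid --- it only forbids blue $K_n$) produces a blue $K_n$ meeting $W$, violating (i). Requirements (i) and (ii) pull in opposite directions on the boundary edges, and no resolution is offered. Your ``dense case'' gadget (two blue $K_{n-1}$'s joined by a red matching) handles the interior of $W$ but not this boundary interaction.

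For comparison, the paper's proof is not inductive and sidesteps the boundary problem entirely. It takes $F$ of minimum order with $F\a(sP_3,K_n)$ and repeatedly exhibits colourings whose red graph is a vertex-disjoint union of cliques together with all of $F'=F[V(K^0\cup\cdots\cup K^{2s-2})]$; the key observation is that the endpoints of an induced $sP_3$ form an independent set of size $2s$, so a graph covered by $2s-1$ cliques contains no induced $sP_3$. Each such colouring forces a new clique disjoint from everything found so far, yielding inside $F$ a vertex-disjoint union of $K_n$, $(2s-2)$ copies of $K_{n-1}$, and a descending chain $K_{n-1},K_{n-2},\dots,K_2$; the final colouring (all of $F'$ and the chain red, the rest blue) makes the blue graph $(n-1)$-partite, forcing at least one additional vertex and hence $|V(F)|\ge\binom{n+1}{2}+(2s-2)(n-1)$. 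If you want to salvage your induction, you would need either to let the gadget colouring depend on the inductive colouring of $F\setminus W$, or to strengthen the inductive hypothesis (e.g., to control blue $K_{n-1}$'s as well), and neither repair is routine. As it stands, the proposal has a genuine gap.
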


\begin{proof}
The upper bound follows from Observation \ref{main}.

For the lower bound, consider a graph $F$ on the smallest number of vertices such that $F\a (sP_3, K_n)$.
We see that there is a copy  of $K_n$ in $F$, otherwise we can color all edges of $F$ blue.

Let us denote this clique $K^0$ and colour it red. We see 
that $F\setminus V(K^0)$ contains a clique $K_{n-1}$ otherwise color $K^0$ red and 
the remaining edges of $F$ blue.  Denote this clique $K^1$ and colour $F_1=F[V(K^0)\cup V(K^1)]$
 red.  For $s\geq 2$, $F_1$ does not contain an induced copy of $sP_3$, so there is no red $sP_3$.  Similarly $F\setminus V(F_1)$
 contains a clique $K_{n-1}$ which we denote $K^2$ .
Repeating the above consideration we conclude
that apart from $K^0$ the graph $F$ contains $2s-1 $ pairwise vertex disjoint cliques $K_{n-1}$ denoted
by $K^1$, $K^2$, $\dots$, $K^{2s-2}$.
Let  $F'=F[V(\bigcup_{i=0}^{2s-2} K^i)]$. Color all edges of $F'$ red and color the edges of $K^{2s-1}$ red. Color the remaining edges blue. 
We see that there is no red $sP_3$, so there must be a blue $K_n$. Thus there is a copy of $K_{n-2}$ induced by the vertices of $F$ not in $\bigcup_{i=0}^{2s-1} K^i$.
Similarly, we observe that $F$ contains a vertex-disjoint union of $F'$ and a graph $K$ that is a vertex disjoint union of copies of  $K_{n-1}, K_{n-2}, \ldots, K_2$. 
If $V(F) = V(F') \cup V(K)$, we color all edges $F'$ red, all edges of $K$ red and the remaining vertices blue. Note that the blue color class forms an $(n-1)$-partite graph and thus does not contain $K_n$.  So, $F\na (sP_3, K_n)$, a contradiction. 
Therefore, 
$|V(F)|> |V(F')|+|V(K)| = n + (2s-2)(n-1) + (n-1) + (n-2) + \ldots + 2 $. 
In particular $|F| \geq \binom{n+1}{2} + (2s-2)(n-1)$. 
\end{proof}

A similar argument works for $(sG, K_n)$ with $G$ being a triangle free graph.

We can improve the lower bound on $\IR(2P_{3},K_{3})$ from $10$ as given in Theorem \ref{pathscomplete} to $11$.

\begin{theorem}
Then $\IR(2P_{3},K_{3})\ \ge\ 11$.
\end{theorem}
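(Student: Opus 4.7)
\medskip

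\textbf{Plan.} I argue by contradiction. Suppose $F$ is a graph on exactly $10$ vertices satisfying $F\a(2P_3,K_3)$. Invoking the iterative extraction in the proof of Theorem~\ref{pathscomplete}, $F$ contains pairwise vertex-disjoint subgraphs $T\cong K_3$ and $E_1,E_2,E_3\cong K_2$, together spanning $9$ vertices. Let $v$ be the remaining vertex and set $V_9:=V(T)\cup V(E_1)\cup V(E_2)\cup V(E_3)$.

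The key coloring is $\chi$: paint red every edge of $F$ with both endpoints in $V_9$, and paint blue every edge of $F$ incident with $v$. Any would-be blue triangle must contain $v$ and two further vertices $x,y\in V_9$; but the edge $xy$ then lies inside $V_9$ and is red, so $\chi$ has no blue $K_3$. By $F\a(2P_3,K_3)$, $\chi$ must exhibit a red induced $2P_3$, and since $v$ is isolated in the red subgraph, this induced $2P_3$ sits entirely inside $V_9$. Hence $F[V_9]$ has an induced copy of $2P_3$ on a $6$-element set $S\subseteq V_9$.

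Next I classify $S$ by its profile $(|S\cap V(T)|,|S\cap V(E_1)|,|S\cap V(E_2)|,|S\cap V(E_3)|)$. Triangle-freeness of $2P_3$ forces $V(T)\not\subseteq S$, and a short case check shows that every profile in which the restriction of the skeleton $T\cup E_1\cup E_2\cup E_3$ to $S$ already contributes $3$ edges is incompatible with $F[S]\cong 2P_3$: in each such case $F[S]$ would have to arise from $3K_2$ by adding one extra edge, which produces only $P_4+K_2$ (or a graph with a triangle), never the disconnected $2P_3$. So only the profiles contributing exactly $2$ base edges survive, namely—up to permutation of the $E_i$'s—$(1;0,1,1)$ and $(2;1,0,0)$. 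Each forces $2$ specific extra edges of $F[V_9]$ joining $V(T)$ to a single $E_i$-vertex, and a prescribed list of cross non-edges between the two would-be $P_3$-components.

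Starting from $\chi$, I plan to modify the coloring in each surviving configuration by declaring red one extra ``linking'' edge inside $S$—for profile $(2;1,0,0)$, the $E_i$-edge inside $S$; for profile $(1;0,1,1)$, the $T$-edge inside $S$—so that the red subgraph becomes a vertex-disjoint union of small cliques of total support at most $9$, which cannot contain an induced $2P_3$ by the same pigeonhole counting as in the proof of Theorem~\ref{pathscomplete}. The forbidden cross non-edges supplied by the induced $2P_3$ structure then exclude every blue triangle that could newly appear through~$v$, producing a coloring with no red induced $2P_3$ and no blue $K_3$ and thereby contradicting $F\a(2P_3,K_3)$.

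The main obstacle is the last step: in each of the two surviving profiles I have to choose the linking edge correctly and check, using the rigid pattern of forced non-edges in $F[V_9]$ together with the absence of any new $v$-triangles, that the modified coloring really avoids both monochromatic obstructions. The profile counting is routine; the matching between forced non-edges and the specific edges one must flip is the delicate part of the argument.
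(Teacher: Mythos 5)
Your opening moves are sound: the iterated extraction does give a vertex-disjoint $K_3\cup 3K_2$ spanning nine vertices of $F$, and the colouring $\chi$ (all of $F[V_9]$ red, the star at $v$ blue) correctly reduces the problem to the case where $F[V_9]$ contains an induced $2P_3$. Your profile analysis of a $6$-set $S$ with $F[S]\cong 2P_3$ is also correct: any profile placing three skeleton edges inside $S$ forces a $3K_2$ inside $2P_3$, impossible since $2P_3$ has matching number $2$, so only the two profiles you list survive. The gap is in the final repair step, and it is genuine on two counts. First, the modification is incoherent as described: in $\chi$ every edge inside $V_9$ is already red, so ``declaring red one extra linking edge inside $S$'' changes nothing; if instead you mean to recolour so that the red subgraph is a disjoint union of cliques supported on at most nine vertices, then every remaining edge of $F[V_9]$ becomes blue, and a blue triangle can now sit entirely inside $V_9$ (for instance on one vertex from each of $E_1$, $E_2$, $E_3$) --- your check only rules out blue triangles through $v$. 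Second, even if one such configuration could be repaired, $F[V_9]$ may contain many induced copies of $2P_3$, of both profiles and on overlapping vertex sets, and a single colouring of $F$ must destroy all of them simultaneously; treating ``each surviving configuration'' separately does not produce one colouring.

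The paper sidesteps exactly this difficulty by staying with the seven-vertex core $C\cong K_3\cup 2K_2$: colouring $F[V(C)]$ and $F[V(F)\setminus V(C)]$ red and all edges in between blue makes the blue graph bipartite, hence triangle-free, while $F[V(C)]$ can contain no induced $2P_3$ because a spanning $K_3\cup 2K_2$ on seven vertices already forbids it. The only remaining danger is a red induced $2P_3$ split between the two red parts, which requires the three leftover vertices to induce a $P_3$; that single case is resolved by a separate structural argument showing that $F-V(K_3)$ is then three copies of $P_3$ glued at a common endpoint and so has no induced $2P_3$. If you want to keep the nine-vertex skeleton, you need a comparably global argument for the case where $F[V_9]$ does contain an induced $2P_3$; as written, the proposal does not yet prove the bound.
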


\begin{proof}
Let $F$ be an arbitrary graph on $10$ vertices. We shall prove that there is an edge-coloring of $F$ with no red copy of $2P_3$ and no blue copy of $K_3$, i.e. that $F\na (2P_3, K_3)$. \\

We can assume that  $F$ contains a vertex disjoint union of $K_3$ and $2K_2$. 
Indeed, $K_3$ exists otherwise we can color all edges of $F$ blue and $F\na (2P_3, K_3)$.
Coloring the edges of  a copy $K$ of $K_3$ red and all others blue implies that there must be a blue $K_3$, so there must be a copy  $K'$ of $K_2$ vertex-disjoint from $K$. Finally, coloring the subgraph of $F$ induced by vertices of $K$ and $K'$ red, and other edges blue, shows that there is a blue $K_3$, i.e., in particular a $K_2$ vertex disjoint from $K\cup K'$.
So, we indeed can assume that $F$ contains a vertex-disjoint union of $K_3$ and $2K_2$.
Note that any graph containing $K_3\cup 2K_2$ as a spanning subgraph does not contain $2P_3$ as an induced subgraph.\\

{\bf Case 1.}  For some copy $K''$ of a vertex-disjoint union of $K_3$ and $2K_2$, $F - V(K'')$ is not isomorphic to $P_3$.
In this case, color the edges of $K''$ and $F-V(K'')$ red and the remaining edges blue. This results in no induced red $2P_3$ and no blue $K_3$, so $F\na (2P_3, K_3)$.\\

{\bf Case 2.}  For any copy $K''$ of  a vertex-disjoint union of $K_3$ and $2K_2$, $F - V(K'')$ is  isomorphic to $P_3$.
We have then that $F$ contains a spanning subgraph that is a union of $K$, $K'$, and $P$, where $P$ is a copy of an induced $P_3$,  $K$ is isomorphic to $K_3$, and $K'$ is isomorphic to $2K_2$. 
By taking an edge $e$ of $K'$, an edge  $e'$  of $P$, we see that the vertices of $F-V(K)$ not incident to $e$ or $e'$ induce a copy of $P_3$. Thus $F-V(K)$ contains a spanning subgraph that is a union of three copies of $P_3$ that share exactly one vertex that is an endpoint in each of these $P_3$'s. Then we see that $F-V(K)$ does not contain a copy of an induced $2P_3$. Color all edges of $K$ and all edges of $F-V(K)$ red and the remaining edges blue. There is no induced red $2P_3$ and no blue $K_3$, so $F\na (2P_3, K_3)$.

\end{proof}

\section{Triangles}\label{Triangles}

Ramsey numbers for multiple copies of graphs were considered by Burr, Erd\H os and  Spencer in \cite{ramseycopies}. Their  paper contains, among others, the following result.

\begin{theorem}\cite{ramseycopies}\label{ramseytriangles}
Let $t\ge s\ge 1$ and $t\ge 2$ be integers. Then $R(sK_{3},tK_{3})\ =\ 2s+3t$.
\end{theorem}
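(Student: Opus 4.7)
The plan is to establish the two bounds of $R(sK_3, tK_3) = 2s+3t$ separately, the lower bound by an explicit construction and the upper bound by induction on $t$ with $s$ fixed.

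For the lower bound, I would exhibit a two-edge-colouring of $K_{2s+3t-1}$ containing neither a red $sK_3$ nor a blue $tK_3$. Partition the vertex set as $V = A \cup B \cup \{v\}$ with $|A| = 2s-1$ and $|B| = 3t-1$, and colour red every edge inside $A$, every edge between $A$ and $B$, and every edge from $v$ to $B$; colour the remaining edges (inside $B$, and from $v$ to $A$) blue. A short case analysis of the partition of triangle vertices shows that every red triangle meets $A$ in at least two vertices (the red subgraph on $B \cup \{v\}$ is just a star centred at $v$), so there are at most $\lfloor (2s-1)/2 \rfloor = s-1$ vertex-disjoint red triangles, and every blue triangle lies entirely inside $B$ (the only blue edges incident to $A \cup \{v\}$ form a star at $v$ into $A$), so there are at most $\lfloor (3t-1)/3 \rfloor = t-1$ vertex-disjoint blue triangles.

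For the upper bound the inductive step runs as follows. Given a two-colouring of $K_n$ with $n = 2s+3t$, I would first argue that the blue subgraph must contain a triangle: if blue were triangle-free, then for each vertex $u$ the blue neighbourhood $N_b(u)$ is a red clique, so if some $|N_b(u)| \geq 3s$ one immediately finds a red $sK_3$; otherwise every vertex has red degree at least $n-3s = 3t$, and a greedy peeling extracts $s$ vertex-disjoint red triangles (at each of $s$ steps the red minimum degree in what remains is at least $3(t-i) \geq 3$, and the red neighbourhood of any vertex must contain a red edge, else it would form a blue clique and hence a blue triangle). So a blue triangle $T$ does exist; removing $V(T)$ leaves $K_{n-3} = K_{2s+3(t-1)}$, and the inductive hypothesis at parameters $(s, t-1)$ produces either a red $sK_3$ (done) or a blue $(t-1)K_3$ which together with $T$ gives the desired blue $tK_3$.

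The principal obstacle is the diagonal base case $t = s$ (together with the small anchor $s = 1$, $t = 2$), because the step above needs $t-1 \geq \max(s,2)$; when $t = s$ the symmetric formula gives $R((s-1)K_3, sK_3) = 5s-2$, which demands $5s-2$ vertices after deletion while only $5s-3$ remain. For these base strata I would argue directly on $K_{5s}$ via the Corr\'adi--Hajnal theorem (a graph on $3k$ vertices with minimum degree at least $2k$ contains a $K_3$-factor): if neither colour contains $sK_3$ then both colour-degree sequences are pinned close to $2s$, and applying Corr\'adi--Hajnal to the denser colour class, or a double-counting of monochromatic triangles when the degrees are more balanced, forces the required monochromatic $sK_3$ and closes the induction.
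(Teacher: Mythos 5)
First, a point of reference: the paper does not prove this statement at all --- it is quoted from Burr, Erd\H{o}s and Spencer \cite{ramseycopies} --- so your argument has to stand on its own. Your lower bound does: the colouring of $K_{2s+3t-1}$ on $A\cup B\cup\{v\}$ is exactly the classical construction, and your analysis of where red and blue triangles can live is correct. The upper bound, however, contains both a computational error and a genuine gap. The error: with $n=2s+3t$ and all blue degrees at most $3s-1$, the red degree is at least $n-3s=3t-s$, not $3t$. Consequently the minimum red degree after deleting $i$ triangles is at least $3t-s-3i$, not $3(t-i)$, and this can fall below $3$ within the range of your induction (e.g.\ $s=4$, $t=5$ gives $6-s=2$ at the last step), so the greedy peeling as written does not go through. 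This part is repairable: if blue is triangle-free, then any six remaining vertices contain a monochromatic, hence red, triangle by $R(3,3)=6$, and $n-3(s-1)=3t-s+3\ge 6$ always holds under the hypotheses, so $s$ disjoint red triangles can be peeled off without any degree bookkeeping.

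The genuine gap is the one you name yourself and then do not close: the diagonal stratum $t=s$ (and the anchor $R(K_3,2K_3)=8$). This is not a routine base case --- it is where essentially all of the difficulty of the theorem is concentrated; already $R(2K_3,2K_3)=10$ required a delicate case analysis in \cite{ramseycopies}. Your proposed repair via Corr\'adi--Hajnal is only a gesture: the claim that the absence of a monochromatic $sK_3$ pins both colour-degree sequences near $2s$ is not justified (a red clique on $3s-1$ vertices inside $K_{5s}$ has no red $sK_3$ and wildly unbalanced degrees), and the triangle-factor form of Corr\'adi--Hajnal needs minimum degree $2n/3=10s/3$ in a single colour on $5s$ vertices, far beyond $2s$; nor is the ``double counting of monochromatic triangles'' made precise. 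The classical route avoids the diagonal entirely by a different decrement: one shows that any colouring containing both a red and a blue triangle contains such a pair sharing a vertex, hence spanning at most five vertices; deleting those five gives $R(sK_3,tK_3)\le R((s-1)K_3,(t-1)K_3)+5$, which reduces every case to the off-diagonal base $R(K_3,tK_3)=3t+2$ together with the triangle-free-colour case. Decrementing only $t$, as you do, structurally cannot reach the diagonal, so without a complete argument for $t=s$ the proof is incomplete.
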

We prove the following.

\begin{theorem}
Let $t$ be a positive integer. Then $\IR(K_{3},tK_{3})=6t$.
\end{theorem}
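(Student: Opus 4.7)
The upper bound $\IR(K_3,tK_3)\le 6t$ follows at once from Observation~\ref{main} applied with $s=1$, since $\IR(K_3,K_3)=R(K_3,K_3)=6$ gives $(1+t-1)\cdot 6=6t$. Concretely, $tK_6\a (K_3,tK_3)$: Ramsey's theorem forces a monochromatic triangle in each $K_6$-component, and if none of these is red, one blue triangle per component assembles into an induced blue $tK_3$ because distinct components are vertex-disjoint.

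For the lower bound $\IR(K_3,tK_3)\ge 6t$, the plan is to prove by induction on $t$ that every graph $F$ on $6t-1$ vertices admits an edge $2$-coloring with no red $K_3$ and no induced blue $tK_3$. The base case $t=1$ is the classical bound $R(K_3,K_3)>5$: the familiar $C_5\cup C_5$ two-coloring of $K_5$ restricts to any graph on at most five vertices without creating monochromatic triangles. For the inductive step I assume the assertion for $t-1$ and let $F$ have $6t-1$ vertices. If $F$ contains no induced $tK_3$ as a subgraph, colouring every edge blue finishes the case. Otherwise I fix an induced $tK_3$ in $F$ on a vertex set $U$ with triangles $T_1,\dots,T_t$, take $S=V(T_1)\cup V(T_2)$ so that $|S|=6$ and $F[S]\cong 2K_3$, and apply the inductive hypothesis to $F'=F-S$ (which has $6t-7=6(t-1)-1$ vertices) to obtain a coloring $c'$ with no red $K_3$ and no induced blue $(t-1)K_3$. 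I then extend $c'$ to $F$ by colouring each triangle of $F[S]$ with a carefully chosen pattern of red edges (so that the red part of $F[S]$ remains triangle-free) and colouring all $S$-to-$V(F')$ edges blue. The global red subgraph is $K_3$-free because $c'$'s red is, $F[S]$'s red is triangle-free by design, and no mixed red triangle can form across $S$ and $V(F')$ since no such edge is red.

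To verify no induced blue $tK_3$ exists in the extended coloring, I suppose $U'\subseteq V(F)$ induces a blue $tK_3$ and write $k_A=|U'\cap V(T_1)|$, $k_B=|U'\cap V(T_2)|$. Since $F[S]\cong 2K_3$ has no edges between its two triangle-components, $F[U'\cap S]=K_{k_A}\cup K_{k_B}$ must be the restriction of a blue $tK_3$ to $U'\cap S$; this immediately rules out $k_A=3$ and $k_B=3$ since each full triangle of $F[S]$ contains a red edge. In every configuration where at most one triangle of $F[U']$ meets $S$, removing that triangle leaves $3(t-1)$ vertices in $V(F')$ inducing an all-blue $(t-1)K_3$, contradicting the inductive hypothesis. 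The main obstacle will be the configurations where two or more triangles of $F[U']$ each contain at least one vertex of $S$---notably the balanced cases $(k_A,k_B)\in\{(1,1),(2,1),(1,2),(2,2)\}$---since the direct removal argument then leaves only $3(t-2)$ vertices inside $V(F')$ and does not yield a contradiction with the inductive hypothesis on its own. Resolving this case requires a careful, possibly asymmetric choice of the red $F[S]$-edges (and, if necessary, a small collection of red $S$-to-$V(F')$ edges introduced so as not to create red triangles when combined with $c'$) so that every such balanced $U'$ is forced to contain at least one red edge, restoring the contradiction.
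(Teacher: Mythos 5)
Your upper bound is fine and matches the paper. The lower bound, however, is not a proof: you yourself flag that the ``balanced'' configurations are unresolved, and that is precisely where the whole argument lives. Concretely, suppose a putative blue induced $tK_3$ on $U'$ has one triangle $\{a,u,v\}$ with $a\in V(T_1)$, $u,v\in V(F')$ and another triangle $\{b,w,x\}$ with $b\in V(T_2)$, $w,x\in V(F')$, and its remaining $t-2$ triangles entirely inside $F'$. All four edges $au,av,bw,bx$ are blue by your construction, and the inductive hypothesis on $c'$ only forbids an induced blue $(t-1)K_3$ inside $F'$; here $F'$ need only contain an induced blue $(t-2)K_3$ together with two blue edges $uv$ and $wx$, which $c'$ in no way excludes. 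Worse, up to six triangles of $U'$ can each grab a single vertex of $S$, leaving as few as $t-6$ triangles inside $F'$. No choice of red edges within $F[S]\cong 2K_3$ can help, since the vertices of $S$ appearing in such a $U'$ may be pairwise in distinct blue triangles and hence never use an $F[S]$-edge at all; and reddening $S$-to-$V(F')$ edges aggressively enough to block every such configuration will in general create red triangles with the (unknown) red graph of $c'$. So the inductive step collapses exactly at the case you deferred, and I do not see a local repair.

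The paper avoids induction entirely. It first proves a small lemma: every $n$-vertex graph admits a vertex bipartition $V=V_1\cup V_2$ such that every induced matching has at most $n/3$ edges with both ends in the same part. Given $F\a (K_3,tK_3)$, it fixes an induced $tK_3$ with triangles $(a_i,b_i,c_i)$, lets $X$ be the remaining vertices with such a bipartition $X=X'\cup X''$, and colours $a_ib_i$, $b_ic_i$, $F[a_i,X']$, $F[c_i,X'']$ and $F[X',X'']$ red, everything else blue. The red graph is triangle-free, and any blue induced $tK_3$ is forced to place a blue induced matching of $t$ edges inside $X'$ or $X''$, whence $|X|\ge 3t$ and $|V(F)|\ge 6t$. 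If you want to salvage your write-up, you should abandon the induction and look for a single global colouring of this kind; the key missing idea in your attempt is a structural device (here, the induced-matching bipartition of the leftover vertices) that simultaneously handles all triangles of the hypothetical blue $tK_3$ rather than peeling them off one or two at a time.
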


\begin{proof}
The upper bound follows immediately from (\ref{maineq}):
$$\IR(K_3, tK_3)\leq t \IR(K_3,K_3)= t R(K_3, K_3) = 6t.$$

\noindent
For the lower bound, we need a statement on induced matchings.\\

\noindent
{\it Claim~~} If $G$ is any graph on $n$ vertices, then there is a partition $V(G)=V_1\cup V_2$ such that any induced matching $M$ in $G$ contains at most $n/3$ edges 
with both endpoints in $V_1$ or in $V_2$, i.e., $|E(M[V_1]\cup M[V_2])|\leq n/3$.

Assume not, consider a partition and an induced matching $M$ such that $M$ has more than $n/3$ edges with both endpoints in one part of the partition.
Let a new partition $V_1'$, $V_2'$ be built so that each edge of $M$ has one endpoint in $V_1'$ and another in $V_2'$, the rest of the vertices are assigned to $V_1'$ or $V_2'$ arbitrarily. Then we see that $V_i'$ has an independent set of size  greater than $n/3$. Then any  matching in $V_i'$ has strictly less than  $|V_i'|-n/3$ edges, $i=1,2$.  So, any induced matching of $G$ contains less than  $|V_1'|+|V_2'|-n/3-n/3 = n/3$ edges with both endpoint in the same part. 
This concludes the proof of Claim.\\

Let $F$ be  a graph, $F\rightarrow (K_3, tK_3)$. We shall show that $|V(F)|\geq 6k$.  We can assume $tK_{3} \prec F$ otherwise we could color all edges of $F$ blue. Let $a_i$, $b_i$, $c_i$, $i=1,2,\dots,t$ be the vertices of these triangles and $X$ be the set of remaining vertices. Let $X=X'\cup X''$ be a partition of $X$ such that any induced matching  of $F[X]$ has at most $|X|/3$ edges with both endpoints in $X'$ or in $X''$. Such a partition exists by Claim.
Color $a_i b_i$, $b_ic_i$,   $F[a_i, X']$, $F[c_i,X'']$,   and $F[X', X'']$ red, $i=1, \ldots, t$, and all remaining edges blue.
We see that there is no red triangle. 
Assume that there is a blue induced copy of $tK_3$, denote it $H$. 
Any blue triangle has at most one vertex in $\{a_i, b_i, c_i\}$ for any $i=1, \ldots, t$.
Thus $H[X]$ contains a blue induced matching on $t$ vertices. This matching could have its edges only with both endpoints in $X'$ or both endpoints in $X''$ since all edges between $X'$ and $X''$ are red. By the way we chose a partition $X', X''$, there are at most $|X|/3$ such edges. Thus $|X|/3 \geq t$, i.e., $|X|\geq 3t$.
This implies that $|V(F)|\geq 6t$.
\end{proof}

\section{Further observations}

While the structure of a graph $F$ such that $F\a (G, tH)$ and $|V(F)|=\IR(G, tH)  = tIR(G,H)$ is clear, as it is simply a vertex disjoint union of $t$ copies of $F'$ such that $F'\a (G,H)$, the structure of such graphs $F$ so that $|V(F)|< t\IR(G,H)$ is not so clear. We claim that such a graph must be connected.

\begin{remark} \label{arrowing-connected}
Let $G$, $H$ be arbitrary connected graphs and $t$ be a positive integer.
Let  for $i=1, \ldots, t$,  $f_i = \IR(G,iH)$ and $F_i$ be a graph of order $f_i$ such that $F_i \a (G,iH)$.

Assume that  $f_t< \min_{\sum t_i=t} \sum f_{t_i}$. Then $F_t$ is connected.
\end{remark}

\begin{proof}
Assume to the contrary  that $F_t$ consists of $m>1$ components $S_1$, $S_2$, $\dots$, $S_m$. 
For $j=1, \ldots, m$,  let  $t_j$ be the largest integer such that $S_j\a (G,t_jH)$. Obviously $1\le t_j\le t$. We have that  $|S_j|\ge f_{t_j}$, so $f_t \geq f_{t_1} + \cdots + f_{t_m}$. Moreover $F_t \a (G, (t_1+\cdots + t_m)H)$. Since $F_t$ is a graph of a smallest order such that $F_t \a (G, tH)$, 
we have that $t_1+\cdots + t_m =t$. But we know that $f_t< f_{t_1} + \cdots +f_{t_m}$ since $t_1+\cdots t_m=t$. 
A contradiction.
\end{proof}

\bibliographystyle{amsplain}

\begin{thebibliography}{1}

\bibitem{Albertson_subchromatic}
M. O. Albertson, R. E. Jamison, S. T. Hedetniemi and S. C. Locke, {\it The subchromatic number of a graph}, Discrete Math., \textbf{74} (1989), 33--49.

\bibitem{becksize1}
J.~Beck, \emph{On the size {R}amsey number of paths, trees and circuits {I}},
  J. Graph Theory \textbf{7} (1983), 115--129.
  
  
  
\bibitem{burr}
S. Burr,
\emph{Ramsey numbers involving graphs with long suspended paths}
J. Lond. Math. Soc., \textbf{24} (1981), pp. 405--413.

\bibitem{ramseycopies}
S. Burr, P. Erd\H os and J. Spencer,
\emph{Ramsey theorems for multiple copies of graphs}
Transactions of the American Mathematical Society, \textbf{209} (1975), pp. 87--99.

\bibitem{confoxsud}
 D. Conlon, J. Fox and B. Sudakov, \emph{On two problems in graph
Ramsey theory}, Combinatorica \textbf{32}(2012), 513–-535.
  
\bibitem{chvatalharary}
 V.  Chv\'atal  and  F.  Harary,  \emph{Generalized  Ramsey  theory  for  graphs.  III.  Small  off-diagonal
numbers}, Pacific J. Math. \textbf{41} (1972), 335–-345.
  
\bibitem{chvatal}
 {V. Chv\'atal}, \emph{Tree-complete graph {R}amsey number}, J. Graph Theory \textbf{1} (1977), 93.
 
  

\bibitem{deuber}
W. Deuber, \emph{A generalization of {R}amsey's theorem}, Infinite and finite
  sets (R.~Rado A.~Hajnal and V.~S\'os, eds.), vol.~10, North-Holland, 1975,
  pp.~323--332.

\bibitem{diestel}
R. Diestel, \emph{Graph Theory 5th ed}, Graduate Texts in Mathematics, Vol. 173, Springer-Verlag, Heidelberg, 2017.


\bibitem{erdramskn}
P. Erd\H os,
\emph{Some remarks on the theory of graphs},
Bull. Amer. Math. Soc., 53 (1947), pp. 292--294

\bibitem{erdirconj}
P. Erd\H os, \emph{On some problems in graph theory, combinatorial analysis and combinatorial
number theory.} Graph theory and combinatorics (Cambridge, 1983) (1984),
1–-17, Academic Press, London, 1984.

\bibitem{erdhapo}
P.~{Erd\H os}, A.~Hajnal and L.~P\'osa, \emph{Strong embeddings of graphs into
  colored graphs}, Infinite and finite sets (R.~Rado A.~Hajnal and V.~S\'os,
  eds.), vol.~10, North-Holland, 1975, pp.~585--595.
 
\bibitem{fox_sudakov_induced} 
  J. Fox and B. Sudakov, {\it Induced Ramsey-type theorems}, Advances in Mathematics, \textbf{219} (2008), 1771--1800.

\bibitem{gorgol}
I. Gorgol, \emph{A note on a triangle-free--complete graph induced Ramsey number},
Discrete Math. \textbf {235}, 1--3 (2001), 159–-163.

\bibitem{gorgol-LB}
I. Gorgol, \emph{A note on lower bounds for induced Ramsey numbers},
accepted to Discuss. Math. Graph Theory 

\bibitem{gorgolluczak}
I. Gorgol and T. {\L}uczak, \emph{On induced Ramsey numbers}, Discrete Math. \textbf{251},
1--3 (2002), 87–-96.

\bibitem{grunewald}
A. Gr\" unewald, \emph{Induced Ramsey Numbers of Graphs}, Bsc. thesis, Karlsruher Institut
f\"ur Technologie, 2016.

\bibitem{irn}
F. Harary, J. Ne\v{s}šet\v{r}il  and V. R\"odl, \emph{Generalized Ramsey theory for graphs.
XIV. Induced Ramsey numbers}, Graphs and other combinatorial topics (Prague,
1982) 59 (1983), 90–-100.


\bibitem{haxkohlu}
P.~Haxell, Y.~Kohayakawa and T.~{\L}uczak, \emph{The induced size-{R}amsey
  number of cycles}, Combinatorics, Probab. Comput. \textbf{4} (1995),
  217--240.

\bibitem{kohprro}
Y.~Kohayakawa, H. J. Pr\"omel and V.~R\"odl, \emph{Induced {R}amsey numbers},
  Combinatorica \textbf{18} (1998), no.~3, 373--404.
  
\bibitem{kostoczka}
A. Kostochka and N. Sheikh, \emph{On the induced Ramsey number IR(P3;H)},
Topics in discrete mathematics 26 (2006), 155–-167.

\bibitem{luczdeg}
T.~{\L}uczak and V.~R\"odl, \emph{On induced {R}amsey numbers for graphs with
  bounded maximum degree}, J. Combin. Theory, Ser. {\bf B} \textbf{66} (1996),
  324--333.

\bibitem{rodl}
V.~R\"odl, \emph{The dimention of a graph and generalized  {R}amsey theorems}, Master's thesis, Charles
  University, Prague, 1973.
  
  \bibitem{rodlp}
V.~R\"odl, \emph{A generalization of {R}amsey Theorem},  Graphs, Hypergraphs and Block Systems, Zielona G\'ora, 1976, 211--219.


\bibitem{schaefer}
M. Schaefer and P. Shah, \emph{Induced graph Ramsey theory}, Ars Combin. \textbf{66}
(2003), 3-–21.


\bibitem{west}
D. West, \emph{Introduction to Graph Theory}, Pearson, 2017.
\end{thebibliography}

\end{document}